\newtheorem{theorem}{Theorem}[section]
\newtheorem{lemma}[theorem]{Lemma}
\newtheorem{corollary}[theorem]{Corollary}
\newtheorem{definition}[theorem]{Definition}
\theoremstyle{definition}
\newtheorem{remark}[theorem]{Remark}
\newtheorem{notation}[theorem]{Notation}
\numberwithin{equation}{section}
\def\be{\begin{equation}}
\def\ee{\end{equation}}
\def\dist{\text{ dist}}
\def\R{\mathbb{R}}
\def\ra{\rightarrow}
\def\cra{\curvearrowright}
\def\ds{\text{ ds}}
\def\dt{\text{ dt}}
\newcounter{alphabet}
\newcommand*{\rom}[1]{\expandafter\@slowromancap\romannumeral #1@}
\begin{document}
\bibliographystyle{amsplain}
\title{Uniformization of intrinsic Gromov hyperbolic spaces\\ with busemann functions}
\author[Vasudevarao Allu]{Vasudevarao Allu}
\address{Vasudevarao Allu, School of Basic Sciences, 
Indian Institute of Technology Bhubaneswar,
Bhubaneswar-752050, 
Odisha, India.}
\email{avrao@iitbbs.ac.in}
\author[Alan P Jose]{Alan P Jose}
\address{Alan P Jose, School of Basic Sciences, Indian Institute of Technology Bhubaneswar,
Bhubaneswar-752050, Odisha, India.}
\email{alanpjose@gmail.com}
\subjclass[2020]{Primary 30L10; Secondary 30L05, 30C65.}
\keywords{Gromov hyperbolicity, Uniformization, Uniform spaces}

\begin{abstract}
For any intrinsic Gromov hyperbolic space we establish a Gehring-Hayman type theorem for conformally deformed spaces. As an application, we prove that any complete intrinsic hyperbolic space with atleast two points in the Gromov boundary can be uniformized by  densities induced by Busemann functions. Furthermore, we establish that there exists a natural identification of the Gromov boundary of $X$ with the metric boundary of the deformed space.
\end{abstract}

\maketitle

\section{Introduction}
Mikhael Gromov \cite{gromov_1987} demonstrated that the key asymptotic characteristics of the classical hyperbolic space $\mathbb{H}^{n}$ can be encapsulated in a simple inequality involving quadruples of points. 
This inequality was adequate to establish a theory of general hyperbolic spaces, known as Gromov hyperbolic spaces. 
These spaces capture many of the global features of classical hyperbolic space while  providing much greater flexibility.  
Gromov hyperbolic spaces represent a significant and well-researched class of metric spaces (see \cite{bridson}, \cite{lindquist}, \cite{herron_shanmu}), encompassing all complete, simply connected Riemannian manifolds with sectional curvature everywhere less than a negative constant. \\[2mm]
Bonk, Heinonen, and Koskela \cite{BHK} developed a uniformizing procedure for Gromov hyperbolic, geodesic and proper metric spaces, and obtained the following result:
\begin{theorem}\label{allu_jose_02_0001}
The conformal deformations $X_\epsilon$ of a proper, geodesic $\delta-$hyperbolic space $X$ are bounded $A(\delta)-$uniform spaces for $0<\epsilon\leq \epsilon_0$.
\end{theorem}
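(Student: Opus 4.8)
The plan is to realize the deformation through the conformal density $\rho_\epsilon(x)=e^{-\epsilon d_w(x)}$, where $d_w=d(\cdot,w)$ for a fixed basepoint $w\in X$, and the induced inner metric
\[
d_\epsilon(x,y)=\inf_\gamma\int_\gamma\rho_\epsilon\,\ds,
\]
the infimum being over rectifiable curves joining $x$ to $y$; write $\ell_\epsilon$ for $\rho_\epsilon$-length and $\partial X_\epsilon$ for the boundary of the metric completion of $X_\epsilon=(X,d_\epsilon)$. Recall that $X_\epsilon$ is $A$-uniform if every pair $x,y$ admits a joining curve $\gamma$ with (i) $\ell_\epsilon(\gamma)\le A\,d_\epsilon(x,y)$ and (ii) $\min\{\ell_\epsilon(\gamma[x,z]),\ell_\epsilon(\gamma[z,y])\}\le A\,\dist_\epsilon(z,\partial X_\epsilon)$ for every $z\in\gamma$. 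Throughout, the candidate uniform curve will be an honest $d$-geodesic $[x,y]$ of $X$, which exists and is finite by properness. Boundedness is immediate: integrating along a geodesic ray from $w$ gives $d_\epsilon(x,w)\le\int_0^\infty e^{-\epsilon t}\,\dt=1/\epsilon$, whence $\diam_\epsilon(X)\le 2/\epsilon$.

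The first substantive step is to turn $\delta$-thinness into pointwise control of $\rho_\epsilon$ along geodesics. Using thin triangles one has $(x|y)_w=\dist(w,[x,y])+O(\delta)$, and along $\gamma=[x,y]$ the function $t\mapsto d_w(\gamma(t))$ decreases to a near-minimum $m:=\dist(w,[x,y])$ at some $t_0$ and then increases, each branch at essentially unit rate, so $d_w(\gamma(t))=m+|t-t_0|+O(\delta)$. Feeding this into the density and integrating produces the fundamental length estimate
\[
\ell_\epsilon([x,y])\asymp\tfrac1\epsilon\,e^{-\epsilon\,(x|y)_w}.
\]
Every implied constant here carries a factor $e^{\pm\epsilon\delta}$ from the $O(\delta)$ term; choosing $\epsilon_0=\epsilon_0(\delta)$ with $\epsilon_0\delta\le 1$ forces all such constants to depend on $\delta$ alone, which is exactly why the statement is confined to $0<\epsilon\le\epsilon_0$.

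The crux is a Gehring–Hayman inequality: that a $d$-geodesic is quasi-minimizing for $d_\epsilon$, namely $\ell_\epsilon([x,y])\le C(\delta)\,d_\epsilon(x,y)$, the reverse inequality being trivial from the definition of $d_\epsilon$. I would split $[x,y]$ at $\gamma(t_0)$ into two arcs on which $d_w$ is monotone, and on each arc subdivide at the points $x_j$ where $d_w$ drops by successive unit steps; the fundamental estimate makes the $j$-th piece have $\ell_\epsilon$ comparable to $e^{-\epsilon d_w(x_j)}$, a geometric series summing to $\asymp\tfrac1\epsilon e^{-\epsilon m}$. For a competing curve $\sigma$ with the same endpoints one argues scale by scale: to pass from the height of $x$ to the height of $y$ the curve $\sigma$ must cross each intermediate level set $\{d_w=d_w(x_j)\}$, and thin-triangle geometry confines the relevant crossing sub-curve to a bounded neighbourhood of the corresponding geodesic piece, on which $\rho_\epsilon$ is comparable to $e^{-\epsilon d_w(x_j)}$; summing these contributions bounds $\ell_\epsilon([x,y])$ by $C(\delta)\,\ell_\epsilon(\sigma)$, hence by $C(\delta)\,d_\epsilon(x,y)$. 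I expect this to be the main obstacle, both because it is where hyperbolicity is genuinely used and because keeping the chaining constants independent of $\epsilon$ (using $\epsilon\le\epsilon_0(\delta)$) requires care.

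Granting the Gehring–Hayman inequality, the two uniformity conditions follow with $[x,y]$ as the uniform curve. First, combining the length estimate with Gehring–Hayman yields the boundary estimate $\dist_\epsilon(z,\partial X_\epsilon)\asymp\tfrac1\epsilon e^{-\epsilon d_w(z)}$: the upper bound comes from running a ray outward from $z$, of $\rho_\epsilon$-length $\int_{d_w(z)}^\infty e^{-\epsilon t}\,\dt=\tfrac1\epsilon e^{-\epsilon d_w(z)}$, and the lower bound because any curve from $z$ to a boundary point is, by Gehring–Hayman and the length estimate, at least comparable to $\tfrac1\epsilon e^{-\epsilon(z|\xi)_w}\gtrsim\tfrac1\epsilon e^{-\epsilon d_w(z)}$. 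Condition (i) is then precisely $\ell_\epsilon([x,y])\le C(\delta)\,d_\epsilon(x,y)$. For the cone condition (ii), fix $z=\gamma(s)$ and let $\gamma(t_0)$ be the point of $[x,y]$ closest to $w$; the subarc not containing $\gamma(t_0)$ has $d_w$ increasing monotonically as one moves from $z$ toward its endpoint, so $d_w\ge d_w(z)$ there and integrating the density gives its $\ell_\epsilon$ at most $\tfrac1\epsilon e^{-\epsilon d_w(z)}$. Hence $\min\{\ell_\epsilon([x,z]),\ell_\epsilon([z,y])\}\le\tfrac1\epsilon e^{-\epsilon d_w(z)}\asymp\dist_\epsilon(z,\partial X_\epsilon)$. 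All constants depend only on $\delta$ once $\epsilon\le\epsilon_0(\delta)$, so $X_\epsilon$ is a bounded $A(\delta)$-uniform space.
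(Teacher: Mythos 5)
Your overall architecture is exactly the one behind Theorem \ref{allu_jose_02_0001} (BHK, Ch.\ 4--5) and the one this paper adapts: deform by $\rho_\epsilon=e^{-\epsilon d_w}$, prove a length formula in terms of the Gromov product, establish a Gehring--Hayman inequality, then read off the boundary-distance estimate and the two uniformity conditions with geodesics as uniform curves. The gap is in the step you yourself identify as the crux. Your level-crossing argument for Gehring--Hayman fails for two reasons. First, a competing curve $\sigma:x\cra y$ is only forced to cross the level sets $\{d_w=h\}$ with $h$ between $d_w(x)$ and $d_w(y)$; it is never forced to descend to the levels near $m\approx(x|y)_w$ where the geodesic dips, and that dip carries the dominant contribution $\asymp\tfrac{1}{\epsilon}e^{-\epsilon m}$ to $\ell_\epsilon([x,y])$. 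Concretely, if $x$ and $y$ lie on the same metric sphere about $w$ (so $d_w(x)=d_w(y)=r$, while $(x|y)_w$ may be bounded), a curve $\sigma$ running along that sphere crosses \emph{no} intermediate level set at all, so your method produces no lower bound on $\ell_\epsilon(\sigma)$; Gehring--Hayman still holds for such $\sigma$, but only because $\sigma$ is exponentially long in $r$, a phenomenon your bookkeeping never sees. Second, the claim that ``thin-triangle geometry confines the relevant crossing sub-curve to a bounded neighbourhood of the corresponding geodesic piece'' is false for an arbitrary competing curve: a level set is an entire metric sphere, and $\sigma$ may cross it arbitrarily far from $[x,y]$. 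Hyperbolicity does not confine arbitrary curves near geodesics --- that it fails to do so is precisely why the theorem has content.

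What the correct proofs do (BHK Ch.\ 5, and Lemmas \ref{allu_jose_02_0019}--\ref{ghlem4} here, culminating in Theorem \ref{allu_jose_02_0002}) is insert a reduction step before any hyperbolicity is used: an elementary computation (Lemma \ref{allu_jose_02_0019}) shows that a sub-curve which is long relative to its endpoint separation can be replaced by a short arc without increasing $\rho$-length, so one may assume $\sigma$ satisfies a local chord-arc condition $l(\sigma[u,v])\leq M|u-v|+C$ (Lemma \ref{allu_jose_02_0022}). Only for such tautened curves does hyperbolicity, via the projection lemma (Lemma \ref{allu_jose_02_0015}) and a chaining argument (Lemmas \ref{allu_jose_02_0024}, \ref{ghlem4}), force $\sigma$ into a bounded neighbourhood $B(\alpha,L)$ of the geodesic; the comparison then follows by chaining disjoint balls $B(x_k,4L)$ centered along the geodesic, whose annuli the modified curve must cross. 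Your confinement claim is true exactly for the modified curve, and the modification lemma is what is missing from your sketch. A secondary point: your ``fundamental length estimate'' $\ell_\epsilon([x,y])\asymp\tfrac{1}{\epsilon}e^{-\epsilon(x|y)_w}$ is missing the factor $\min\{1,\epsilon|x-y|\}$ (compare Lemma \ref{comparelemma} and Corollary \ref{notwritten}) and is false when $|x-y|\ll 1/\epsilon$; this does not sink your uniformity deductions, which rely on Gehring--Hayman and direct integration rather than on the formula, but as displayed it is incorrect.
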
 
A metric space is said to be uniform if any two points can be joined by a path which is neither too close to the boundary nor too crooked, see Definition \ref{allu_jose_02_0008} for a precise definition. 
Uniform domains were introduced independently by Martio and Sarvas \cite{martio_sarvas_} and Jones \cite{john_}, many  domains such as unit ball or upper half-plane in Euclidean domains serves as the examples of uniform domains. This class of domains became the 'nice domains' in several contexts including quasiconformal theory.\\[2mm]
Motivated by the work of Bonk et al., Butler \cite{butler} and Zhou et al \cite{zhou} independently developed unbounded counterparts of Theorem \ref{allu_jose_02_0001}  using a density that is exponential in a Busemann function. Busemann functions are known as the distance functions on a space from a point at infinity (see \cite{buyalo}). Zhou et al. considered proper geodesic $\delta-$hyperbolic spaces having atleast two points on the Gromov boundary whereas Butler \cite{butler} considered complete geodesic $\delta-$hyperbolic spaces which are roughly starlike from a point in the Gromov boundary. \\[2mm]
'Free quasiworld' introduced by J. V{\"a}is{\"a}l{\"a} is the extensive study of quasiconformal theory in infinite-dimensional Banach spaces where many of the conveniences of Euclidean spaces are absent (see \cite{quasiworld}). For instance, in this setting the space is not locally compact and he has demonstrated that existence of geodesics is not assured. 
He has also developed a foundational theory for more general hyperbolic spaces, which do not necessarily need to be geodesic or proper \cite{vaisala_2004} and thereby established the connections between hyperbolic and uniform domains in Banach spaces (see \cite{vaisala_2004_h}). \\[2mm]
Inspired from the works of Butler \cite{butler} and Zhou et al. \cite{zhou}, we enquire whether such a uniformization procedure with Busemann functions exists for intrinsic spaces which need not be geodesic or proper. 
A space is said to be intrinsic if the distance between any two points is equal to the infimum of the length of all curves connecting them.  
The main ingredient in the proof of Theorem \ref{allu_jose_02_0001} was the \textit{Gehring-Hayman Theorem}  \cite[Theorem 5.1]{BHK} for conformal deformations of hyperbolic spaces, which was also used by Butler \cite{butler}  and Zhou et al. \cite{zhou}.
The name is inspired  from the work of Gehring and Hayman \cite{gehring_hayman_}, which established that the hyperbolic geodesic within a simply connected hyperbolic domain in the plane minimizes the Euclidean length among all curves in the domain with same endpoints, up to a universal multiplicative constant. 
This theorem has been extended to quasihyperbolic geodesics in $\R^n$ that are quasiconformally equivalent to uniform domains (see \cite{hn}, \cite{hr}).\\[2mm]
Because we are concerned with spaces that  not necessarily geodesic, the Gehring-Hayman theorem established in \cite[Theorem 5.1]{BHK} does not apply. 
However, despite the absence of geodesics, we show that there exists a family of $h-$short arcs (defined in Section 2) which serves as a substitute for geodesics, enabling us to establish a theorem similar to Gehring-Hayman.
\begin{notation}
In this paper, $X$ denotes a metric space and $|x-y|$ is the distance between the points. By a curve $\gamma$ in $X$, we mean a continuous function $\gamma:[a, b] \rightarrow X$. An arc in a space $X$, is a subset homeomorphic to a real closed interval. We adopt the convention of using $\gamma$ to denote the parametrization of the curve as well as the image of the curve in $X$. To indicate that $\alpha $ is a curve or arc joining the points $x$ and $y$, we write $\alpha :x\cra y$. The length of a rectifiable curve $\gamma$ in a metric space is denoted by $l(\gamma)$.
\end{notation}
\begin{theorem}\label{allu_jose_02_0002}
Let $X$ be an intrinsic metric space which is $(\kappa, h)-$ Rips and $\rho :X\ra (0, \infty)$ be a continuous function that satisfies 
\be \label{allu_jose_02_0003}
\frac{1}{\lambda}\exp\{-\epsilon|x-y|\} \leq \frac{\rho(x)}{\rho(y)} \leq \lambda \exp\{\epsilon|x-y|\}
\ee
for all $x,y \in X$,  $\epsilon >0$ and for some fixed $\lambda\geq 1$. For $h<1/13$, let $\mathcal{A}$ denote the family of all $h-$short arcs $\alpha:x\cra y$ for any $x, y\in X$ satisfying $l(\alpha) \leq 2|x-y|$.
Then, there exists $\epsilon_0(\kappa, h)>0$, $K(\kappa, h)>0$ such that for any $0<\epsilon\leq \epsilon_0$  and $x, y\in X$ 
\begin{equation}\label{allu_jose_02_0004}
l_\rho(\alpha) \leq K l_\rho (\gamma)
\end{equation}
where  $\gamma: x\cra y $ is any curve and  $\alpha\in \mathcal{A}$ with same endpoints.
\end{theorem}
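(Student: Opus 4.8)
The plan is to bound $l_\rho(\alpha)$ by discretizing the short arc in the background metric, converting it into a sum of point–densities via the Harnack hypothesis \eqref{allu_jose_02_0003}, and then charging each term of that sum to a portion of the competitor $\gamma$ using the Rips condition. First I would parametrize $\alpha$ by arclength and choose consecutive points $x=z_0,z_1,\dots,z_n=y$ on $\alpha$ with $|z_{i-1}-z_i|\asymp 1$; since $X$ is intrinsic and $\alpha$ is $h$-short, arclength and distance along subarcs of $\alpha$ are comparable, and the global bound $l(\alpha)\le 2|x-y|$ keeps $n\asymp|x-y|$. On each subarc the density varies by at most the bounded factor $\lambda e^{\epsilon}$ by \eqref{allu_jose_02_0003}, so $l_\rho(\alpha)\le C\sum_i\rho(z_i)$ and likewise each subarc has $\rho$-length $\ge c\,\rho(z_i)$. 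This reduces \eqref{allu_jose_02_0004} to the combinatorial inequality $\sum_i\rho(z_i)\le C\,l_\rho(\gamma)$, with constants I will track to confirm the asserted dependence on $\kappa$ and $h$.

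The geometric heart of the argument is a single excursion estimate, and this is where hyperbolicity enters. A competitor $\gamma$ can only be $\rho$-cheaper than $\alpha$ by detouring into regions where $\rho$ is smaller, which in the uniformization picture means detouring away from $\alpha$ toward the interior. Because $\alpha$ is $h$-short it is a quasigeodesic, so the $(\kappa,h)$-Rips condition (through a Morse/thin-triangle estimate) forces any such detour to be metrically long: examining the triangle spanned by $x$, $y$, and a point $w\in\gamma$ realizing $\dist(z_i,\gamma)=:r_i$ shows $z_i$ lies within a controlled distance of $[x,w]\cup[w,y]$, and exponential divergence of quasigeodesics then makes the excursion of $\gamma$ ``over'' $z_i$ have length at least $e^{c r_i}$ with $c=c(\kappa,h)$. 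Feeding this back through \eqref{allu_jose_02_0003}, the detour saves at most a factor $e^{-\epsilon r_i}$ in density while paying a factor $e^{c r_i}$ in length, so its $\rho$-length still dominates $\rho(z_i)$ precisely when $c-\epsilon>0$. This dictates the choice of $\epsilon_0=\epsilon_0(\kappa,h)$ with $c-\epsilon_0>0$, and the hypothesis $h<1/13$ is what guarantees the shortness of $\alpha$ beats the additive Rips constants in these triangle estimates. I would isolate this as a self-contained lemma: a $\rho$-length lower bound for any subarc of $\gamma$ joining a neighborhood of one point of $\alpha$ to a neighborhood of another.

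With that lemma in hand the remaining task is the summation, which I expect to be the main obstacle. I would split the indices into those where $\gamma$ passes close to $z_i$ (say $r_i$ bounded) and those where it stays far. For the close indices the subarc of $\gamma$ inside $B(z_i,2r_i)$ has $\rho$-length $\ge c\,\rho(z_i)$ by \eqref{allu_jose_02_0003}, and a bounded-overlap count for these balls—supplied again by the Rips condition, which keeps nearest-point projections to $\alpha$ coarsely monotone—lets me sum the contributions to $\le C\,l_\rho(\gamma)$. The far indices come in maximal stretches $z_a,\dots,z_b$ over which $\gamma$ performs one long excursion $\gamma'$; here the excursion lemma, combined with $\epsilon\le\epsilon_0$ so that the exponential length gain outweighs the exponential $\rho$-loss incurred as $\gamma'$ recedes, gives $\sum_{i=a}^{b}\rho(z_i)\le C\,l_\rho(\gamma')$, with distinct stretches charging disjoint subarcs of $\gamma$. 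The delicate bookkeeping is exactly the interaction between the depth $r_i$, the $\alpha$-length of the stretch, and the density decay along $\gamma'$, all of which must be balanced through \eqref{allu_jose_02_0003}; this is the step I would budget the most care for.

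Finally I would assemble the pieces: the discretization yields $l_\rho(\alpha)\le C\sum_i\rho(z_i)$ and the summation yields $\sum_i\rho(z_i)\le C\,l_\rho(\gamma)$, so \eqref{allu_jose_02_0004} holds with $K=K(\kappa,h)$ for every $0<\epsilon\le\epsilon_0(\kappa,h)$, after verifying that each constant introduced depends only on $\kappa$, $h$ and on the gap $c-\epsilon_0>0$. Throughout, the two hypotheses singled out in the statement play clearly separated roles: $h<1/13$ secures the quasigeodesic and thin-triangle estimates underlying the excursion lemma, while $l(\alpha)\le 2|x-y|$ controls the number of discretization points and keeps arclength comparable to distance so that the per-scale estimates may be summed.
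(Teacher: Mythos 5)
Your plan takes a genuinely different route from the paper, but it has a real gap at its crux, and it is exactly the step you deferred. The paper never confronts deep detours at all: it first replaces the competitor $\gamma$ by a $\rho$-shorter curve satisfying the local chord--arc condition $l(\gamma[u,v])\leq 6\lambda^2|u-v|+1$ (Lemma \ref{allu_jose_02_0022}), then uses the projection lemma and a real-variable lemma of Bonk--Heinonen--Koskela to show that any such curve must stay in a fixed $L$-neighborhood of $\alpha$ with $L\asymp 1/\epsilon$ (Lemmas \ref{allu_jose_02_0024} and \ref{ghlem4}); inside that neighborhood Harnack gives uniform density comparability and a disjoint-annuli count finishes the proof. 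Your approach instead meets the detours head-on, and the key assertion --- that a detour over $z_i$ at distance $r_i$ ``saves at most a factor $e^{-\epsilon r_i}$ in density while paying a factor $e^{cr_i}$ in length'' --- is false as stated, because it pairs a length bound at one scale with a density bound at another. The quantity $r_i=\dist(z_i,\gamma)$ only bounds the depth of the excursion from \emph{below}: the excursion may run at depth $D\gg r_i$, where \eqref{allu_jose_02_0003} allows $\rho$ to be as small as $\lambda^{-1}\rho(z_i)e^{-\epsilon D}$, while the divergence estimate you invoke yields only total length $\geq c_1e^{c_2 r_i}$. Multiplying these gives $\rho(z_i)e^{c_2r_i-\epsilon D}$, which is useless for large $D$. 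Repairing this requires decomposing each excursion dyadically by depth and proving a divergence estimate at every depth scale; moreover, in the intrinsic, non-geodesic, non-proper setting the exponential divergence lemma for $h$-short arcs is itself not off-the-shelf (V\"ais\"al\"a's toolkit, which the paper relies on, supplies the projection lemma but no divergence function), so you would have to rerun the bisection argument with $h$-short arcs and track the additive errors. All of this is precisely the bookkeeping your proposal postpones as ``the step I would budget the most care for''; it is the heart of the matter, not a finishing touch.

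A second concrete error: the bounded-overlap claim for the balls $B(z_i,2r_i)$, which you justify by asserting that the Rips condition keeps nearest-point projections of $\gamma$ onto $\alpha$ ``coarsely monotone.'' Projections of an arbitrary competitor are not coarsely monotone --- $\gamma$ may backtrack arbitrarily often --- and bounded overlap genuinely fails: if $\gamma$ runs parallel to $\alpha$ at distance $R$, then $r_i\approx R$ for $\sim R$ consecutive indices, and a single point of $\gamma$ lies in $\sim R$ of the balls $B(z_i,2R)$. This is harmless for your ``close'' indices (there $r_i\leq R_0$, so the multiplicity is $\lesssim R_0$), but it means the far/close dichotomy and the claimed disjointness of the subarcs charged by different ``maximal stretches'' need a careful construction that is not supplied. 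I would also correct a smaller misreading: $h<1/13$ is not about quasigeodesic stability; in the paper it arises as the requirement $h<1/(1+2M)$ with $M=6\lambda^2$ needed for Lemmas \ref{allu_jose_02_0024} and \ref{ghlem4}. In summary, your strategy is plausible (it can be verified in the model case of $\mathbb{H}^2$) and, if completed, would constitute an alternative proof avoiding the chord--arc rerouting; but as written the central charging lemma is both unproven and incorrectly formulated, so the argument does not yet establish \eqref{allu_jose_02_0004}.
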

Let $X$ be a $\delta-$hyperbolic space. For any $\omega \in \delta_\infty X$ (see \ref{allu_jose_02_0011}), the function
\be 
b_\omega: X\times X \rightarrow \R,\,\,\, b_\omega(x, y) = (\omega | y)_x -(\omega |x )_y
\ee
is well defined. We let $\mathcal{B}(\omega) \subset \R^X $ be the set which consists of all functions $b: X\ra \R$ for each of which there are $o\in X$ and constant $c\in \R$ with $b(x) \dot{=} b_{\omega, o}(x)+c $, where $b_{\omega, o}(x) = b_\omega(x, o)$. 
Any function from $\mathcal{B}(\omega)$ is called a Busemann function based at $\omega \in\delta_\infty X$. 
We will mostly deal with the canonical functions $b_{\omega, o}$, where $o\in X$ and $\omega \in \delta_\infty X$.

We fix a point $\omega\in \delta_\infty X$ and  a Busemann function $b=b_{\omega, o}\in \mathcal{B}(\omega)$ with $o\in X$.  Define a family of maps $\rho_\epsilon :X \rightarrow (0, \infty)$ for $\epsilon >0$ as
\begin{equation}\label{allu_jose_02_0005}
\rho_\epsilon (x) = e^{-\epsilon b(x)}.
\end{equation}
Define a metric $d_\epsilon$ on $X$ by 
\begin{equation}\label{allu_jose_02_0006}
d_\epsilon(x, y) = \inf \int_\gamma \rho_\epsilon \ds ,
\end{equation}
where the infimum is taken over all rectifiable curves $\gamma $ joining $x$ and $y$. We write  $X_\epsilon = \left(X, d_\epsilon \right)$ for conformal deformation of $X$ with a conformal factor of $\rho_\epsilon$.                                                                                                                                                                                                                                                                                                                                                                                                                        Note that, for any rectifiable curve $\gamma$ in $X$, the length of the curve $\gamma$, denoted by $l_\epsilon\left(\gamma\right)$, in the metric space $X_\epsilon$ is given by
$$l_\epsilon(\gamma)= \int_\gamma \rho_\epsilon \ds,$$
provided the identity map $(X, |x-y|) \rightarrow (X, l)$ is a homeomorphism, where $l$ is the inner metric of $X$(see \cite[Lemma 2.6]{BHK} and Appendix \cite{BHK} for a detailed discussion).\\[2mm]
With the help of Theorem \ref{allu_jose_02_0002}, we prove that the deformed spaces using the conformal densities induced by Busemann functions are uniform.
\begin{theorem}\label{allu_jose_02_0007}
Let $X$ be a complete intrinsic $\delta-$hyperbolic space and suppose there are atleast two points in the Gromov boundary of $X$. Then the conformal deformations  $X_\epsilon = (X, d_\epsilon)$ are unbounded uniform spaces.
\end{theorem}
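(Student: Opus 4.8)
The plan is to show that the family $\mathcal{A}$ of $h$-short arcs furnished by Theorem \ref{allu_jose_02_0002} is exactly the family of uniform curves for $X_\epsilon$, and to verify directly the quasiconvexity (length) and cigar conditions of Definition \ref{allu_jose_02_0008}. First I would confirm the hypotheses of Theorem \ref{allu_jose_02_0002}: being intrinsic and $\delta$-hyperbolic, $X$ is $(\kappa,h)$-Rips, while any Busemann function $b=b_{\omega,o}$ is (rough) $1$-Lipschitz, so $\rho_\epsilon(x)/\rho_\epsilon(y)=e^{-\epsilon(b(x)-b(y))}$ satisfies the Harnack-type inequality \eqref{allu_jose_02_0003} with $\lambda=\lambda(\delta,\epsilon_0)$. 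Fixing $\epsilon\le\epsilon_0(\kappa,h)$ and $x,y\in X$, pick $\alpha\in\mathcal{A}$ joining them. The length condition is then immediate from Theorem \ref{allu_jose_02_0002}: since $l_\epsilon(\alpha)=l_\rho(\alpha)\le K\,l_\rho(\gamma)=K\,l_\epsilon(\gamma)$ for every competitor $\gamma:x\cra y$, taking the infimum over $\gamma$ yields $l_\epsilon(\alpha)\le K\,d_\epsilon(x,y)$.

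The second step is the distance-to-boundary estimate $\operatorname{dist}_\epsilon(z,\partial X_\epsilon)\approx\rho_\epsilon(z)$ for all $z\in X$, with constants depending on $\delta,\epsilon$. The assumption that $\delta_\infty X$ has at least two points enters here: besides $\omega$ it supplies a point $\zeta$ toward which $b\to+\infty$ and $\rho_\epsilon\to 0$, so $\zeta$ lies at finite $d_\epsilon$-distance and the metric boundary $\partial X_\epsilon$ (naturally identified with $\delta_\infty X\setminus\{\omega\}$) is nonempty. For the upper bound I would run an $h$-short ray from $z$ away from $\omega$ and integrate the exponentially decaying $\rho_\epsilon$, giving $\int\rho_\epsilon\,ds\le C\rho_\epsilon(z)/\epsilon$. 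For the lower bound, a boundary point is at infinity in $(X,|\cdot|)$, so any curve $\gamma$ from $z$ toward $\partial X_\epsilon$ has $|\cdot|$-length at least $1$; on its initial unit segment every point $w$ satisfies $|w-z|\le 1$, so \eqref{allu_jose_02_0003} gives $\rho_\epsilon(w)\ge\lambda^{-1}e^{-\epsilon}\rho_\epsilon(z)$ and hence $l_\epsilon(\gamma)\ge\lambda^{-1}e^{-\epsilon}\rho_\epsilon(z)$. Taking the infimum over $\gamma$ yields $\operatorname{dist}_\epsilon(z,\partial X_\epsilon)\ge c\,\rho_\epsilon(z)$.

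It then remains to establish the cigar condition $\min\{l_\epsilon(\alpha[x,z]),l_\epsilon(\alpha[z,y])\}\le A\operatorname{dist}_\epsilon(z,\partial X_\epsilon)$ for all $z\in\alpha$, which by the previous step reduces to bounding this minimum by $C\rho_\epsilon(z)$. Let $z_0\in\alpha$ minimize $b$ (the apex, where $\rho_\epsilon$ is largest). The crucial geometric input is that along an $h$-short arc the Busemann function grows essentially linearly away from the apex, $b(w)\ge b(z_0)+d_{\mathrm{inner}}(w,z_0)-C_0$, equivalently $\rho_\epsilon(w)\le C\rho_\epsilon(z_0)e^{-\epsilon\,d_{\mathrm{inner}}(w,z_0)}$. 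Granting this, if $z\in\alpha[x,z_0]$ then, parametrizing the sub-arc by arc length $s$ from $z_0$,
\[
l_\epsilon(\alpha[x,z])\le C\rho_\epsilon(z_0)\int_{s_z}^{s_x}e^{-\epsilon s}\,ds\le\tfrac{C}{\epsilon}\rho_\epsilon(z_0)e^{-\epsilon s_z}\le\tfrac{C'}{\epsilon}\rho_\epsilon(z),
\]
and since $\min\{\cdots\}\le l_\epsilon(\alpha[x,z])$ this suffices; the case $z\in\alpha[z_0,y]$ is symmetric. I expect this linear-growth estimate for $b$ along the non-geodesic arcs of $\mathcal{A}$ to be the main obstacle: lacking genuine geodesics, one must combine the Rips/thin-triangle property with the defining $h$-shortness to transfer the standard behaviour of Busemann functions along geodesics to these arcs.

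Finally, unboundedness follows by pushing toward $\omega$: along an $h$-short ray to $\omega$ the function $b$ decreases at unit rate, so $\rho_\epsilon=e^{-\epsilon b}$ grows exponentially and its integral diverges; Theorem \ref{allu_jose_02_0002} (used as $d_\epsilon\ge K^{-1}l_\epsilon(\alpha)$) then forces $d_\epsilon(o,\gamma(s))\to\infty$, so $\omega$ is infinitely far and $X_\epsilon$ is unbounded. Collecting the constants from the length, distance-to-boundary and cigar estimates produces a single uniformity constant $A=A(\delta,\kappa,h,\epsilon)$, which completes the proof for every $0<\epsilon\le\epsilon_0$.
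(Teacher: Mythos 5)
You follow essentially the same route as the paper, and most of your reductions are sound: quasiconvexity of $h$-short arcs with $l(\alpha)\le 2|x-y|$ is exactly the paper's application of Theorem \ref{allu_jose_02_0002}; the bound $d_\epsilon(z)\gtrsim\rho_\epsilon(z)$ for the distance to the boundary follows from the Harnack inequality \eqref{allu_jose_02_0018} much as in the paper (whose estimate \eqref{bdrydist} retains an extra factor $\epsilon^{-1}$ that your unit-length-segment argument gives away --- harmless, since you let the uniformity constant depend on $\epsilon$; but note that your phrase ``a boundary point is at infinity in $(X,|\cdot|)$'' is precisely where the completeness of $X$ and the local bilipschitz equivalence of $X$ with $X_\epsilon$ must be invoked); unboundedness along arcs toward $\omega$ and the apex-based reduction of the cigar condition also match the paper, which anchors the exponential decay at the point $y'\in\gamma$ with $l(\gamma[y',y])=(x|\omega)_y$ rather than at a minimizer $z_0$ of $b$ --- an inessential difference.

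The genuine gap is the statement you grant yourself: the linear growth $b(w)\ge b(z_0)+l(\alpha[z_0,w])-C_0(\delta,h)$ of the Busemann function along $h$-short arcs. You correctly flag it as the main obstacle, but flagging is not proving, and in this non-geodesic setting it is the heart of the entire theorem: it is the content of the paper's Lemma \ref{lem1} and Lemma \ref{conelemma}, which take up most of Section 4. The argument there is genuinely nontrivial: Lemma \ref{lem1} builds an $h$-short triangle over the arc with a third vertex $p\in X$, compares points on different sides via the tripod lemma (Lemma \ref{allu_jose_02_0013}), and runs a case analysis according to the position of the points relative to the center arcs of the triangle; Lemma \ref{conelemma} then treats the ideal point $\omega$, which cannot serve as a vertex, by introducing a $(4\delta+2h,h)$-road $\alpha_n:x\cra u_n$ with $\{u_n\}\in\omega$, auxiliary points $x_n,y_n$ defined by Gromov products with $u_n$, and a limiting argument combining Lemma \ref{allu_jose_02_0012} with \eqref{allu_jose_02_0017}. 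Moreover, the missing lemma is needed twice in your own outline: besides the cigar condition, your claim that a second boundary point $\zeta\neq\omega$ yields a non-convergent $d_\epsilon$-Cauchy sequence --- which is what makes the boundary of $X_\epsilon$ nonempty, so that the cigar condition is even meaningful --- is proved in the paper via Lemma \ref{comparelemma}, whose proof again rests on Lemma \ref{conelemma}. Until the linear-growth estimate is established, both the existence of the boundary and the double cone arc condition remain unproven, so your proposal is an accurate road map of the paper's proof rather than a proof.
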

We also prove that there exists a natural identification of Gromov boundary of $X$ with the boundary of the deformed space $X_\epsilon$.
\begin{theorem}\label{allu_jose_02_0007.1}
Let $X$ be  a complete intrinsic $\delta-$hyperbolic space and $b_{\omega, o} \in \mathcal{B}(\omega)$ be a Busemann function. Suppose there exists atleast two points in the Gromov boundary of $X$. Then there exists a natural identification  $\phi$ between $ \delta_\infty X\setminus \{\omega\} $ and the boundary of the deformed space $X_\epsilon$.
\end{theorem}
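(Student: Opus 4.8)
The plan is to define $\phi$ by following $h$-short rays out to the Gromov boundary and reading off their limits in the metric completion $\overline{X_\epsilon}$. Fix $\xi\in\delta_\infty X\setminus\{\omega\}$ and an $h$-short ray $\gamma:o\cra\xi$, whose existence (and whose near-geodesic behaviour) I take from the short-arc machinery of Section~2 together with completeness of $X$. Since $\xi\neq\omega$, the Gromov product $(\xi\,|\,\omega)_o$ is finite, and a Busemann function based at $\omega$ grows essentially linearly along a ray directed away from $\omega$, so $b(\gamma(t))\geq t-C$ for some $C=C(\delta,\xi)$ and all large $t$. By \eqref{allu_jose_02_0005} this forces exponential decay $\rho_\epsilon(\gamma(t))\leq e^{\epsilon C}e^{-\epsilon t}$, hence $l_\epsilon(\gamma)=\int_\gamma\rho_\epsilon\,\ds<\infty$; thus $\gamma$ is $d_\epsilon$-Cauchy and converges to a point $\phi(\xi)\in\overline{X_\epsilon}$. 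Finally $\phi(\xi)$ lies in the metric boundary rather than in $X$ itself: $\gamma(t)$ eventually leaves every bounded subset of $(X,|\cdot|)$, while the identity map $X\to X_\epsilon$ is a homeomorphism onto its image, so no interior point of $X_\epsilon$ can be the limit.

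To see that $\phi$ is well defined I would compare two $h$-short rays $\gamma_1,\gamma_2$ to the same $\xi$: being asymptotic, for large $t$ their points $\gamma_1(t),\gamma_2(t)$ are joined by an arc $\alpha\in\mathcal{A}$ lying far out toward $\xi$, and the Gehring--Hayman inequality \eqref{allu_jose_02_0004} of Theorem~\ref{allu_jose_02_0002} gives $d_\epsilon(\gamma_1(t),\gamma_2(t))\leq K\,l_\epsilon(\alpha)\to0$, so the two endpoints agree. The analytic engine for the remaining steps is the two-sided estimate
\[
d_\epsilon(x,y)\;\asymp\;\exp\Big(-\epsilon\inf_{w\in\alpha}b(w)\Big),\qquad \alpha\in\mathcal A,\ \alpha:x\cra y,
\]
whose upper bound is immediate from integrating $\rho_\epsilon$ along $\alpha$ and whose lower bound is exactly where Theorem~\ref{allu_jose_02_0002} enters: every competing curve $\gamma:x\cra y$ has $l_\epsilon(\gamma)\geq K^{-1}l_\epsilon(\alpha)$, and $l_\epsilon(\alpha)$ is controlled below by the value of $\rho_\epsilon$ at the point of $\alpha$ nearest $\omega$. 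For injectivity, fix distinct $\xi,\zeta\in\delta_\infty X\setminus\{\omega\}$; as $t\to\infty$ the short arcs joining $\gamma_\xi(t)$ to $\gamma_\zeta(t)$ accumulate on a bi-infinite short arc from $\xi$ to $\zeta$, and since both endpoints differ from $\omega$ this limiting arc stays a bounded distance from $\omega$, so $\inf_{w}b(w)$ is bounded above by a finite number $m(\xi,\zeta)$. The displayed estimate then yields $d_\epsilon(\phi(\xi),\phi(\zeta))\gtrsim e^{-\epsilon m(\xi,\zeta)}>0$, so $\phi(\xi)\neq\phi(\zeta)$.

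The main work is surjectivity together with the topological matching. Given $\zeta\in\partial X_\epsilon$, choose a $d_\epsilon$-Cauchy sequence $(x_n)$ with $d_\epsilon(x_n,\zeta)\to0$; completeness of $X$ and the local comparison of $d_\epsilon$ with $|\cdot|$ force $(x_n)$ to leave every bounded subset of $(X,|\cdot|)$, and the displayed estimate converts $d_\epsilon$-Cauchyness into $\inf_{w}b(w)\to+\infty$ along the connecting arcs. Comparing this quantity with the Gromov product based at $\omega$ shows it cannot occur either when $(x_n)$ drifts toward $\omega$ or when it splits between two distinct boundary points; hence $(x_n)$ Gromov-converges to a unique $\xi\in\delta_\infty X$, necessarily with $\xi\neq\omega$, and I set $\phi(\xi)=\zeta$. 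The same estimate, read as $d_\epsilon|_{\partial X_\epsilon}\asymp e^{-\epsilon\inf_w b}$, identifies the restriction of $d_\epsilon$ to $\partial X_\epsilon$ with a visual metric based at $\omega$; since such a metric induces the subspace topology on $\delta_\infty X\setminus\{\omega\}$, the map $\phi$ is a homeomorphism.

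I expect the genuine obstacle to be this two-sided estimate and, above all, the control of $\inf_{w\in\alpha}b(w)$ and its comparison with the Gromov product at $\omega$ \emph{without recourse to geodesics}: each classical step — linear growth of $b$ along a ray, fellow-travelling of asymptotic rays, and the behaviour of the connecting arc's closest approach to $\omega$ — must be recovered from the family $\mathcal A$ of $h$-short arcs through Theorem~\ref{allu_jose_02_0002}, and making these estimates uniform over $\mathcal A$ is the delicate part.
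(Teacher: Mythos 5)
There is a genuine gap, and it sits at the very foundation of your construction. You define $\phi(\xi)$ by following ``an $h$-short ray $\gamma:o\cra\xi$'' whose existence you take from the short-arc machinery and completeness of $X$; but in the paper's setting (intrinsic, not proper, not geodesic) no such object is available. Completeness gives limits of Cauchy sequences, not arcs tending to a boundary point all of whose subarcs are $h$-short; this is precisely the difficulty the paper flags in Section~2 (``in a proper, geodesic, hyperbolic space any point in the Gromov boundary and any point in $X$ can be joined by a geodesic ray\dots we do not have that luxury''), and it is why V\"ais\"al\"a's $(\mu,h)$-roads --- \emph{sequences} of $h$-short arcs, not a single ray --- are introduced. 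The same failure of compactness undermines your injectivity step: the claim that the short arcs joining $\gamma_\xi(t)$ to $\gamma_\zeta(t)$ ``accumulate on a bi-infinite short arc from $\xi$ to $\zeta$'' is an Arzel\`a--Ascoli-type extraction that needs properness. The paper's proof is built exactly to avoid both of these moves: $\phi(\zeta)$ is defined as the $d_\epsilon$-limit of an arbitrary Gromov sequence $\{v_n\}\in\zeta$ (such sequences are shown to be $d_\epsilon$-Cauchy in the incompleteness part of the proof of Theorem~\ref{allu_jose_02_0007}, via Lemma~\ref{comparelemma}), and injectivity/surjectivity are run entirely through the two-sided estimate of Corollary~\ref{notwritten} combined with the relation \eqref{3.6} between $(x|y)_b$ and $(x|y)_o$, with no limiting arcs at any stage; surjectivity uses completeness of $X$ only to rule out that a $d_\epsilon$-Cauchy sequence is $|\cdot|$-Cauchy.

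Two smaller points. Your ``analytic engine''
\begin{equation*}
d_\epsilon(x,y)\asymp\exp\Bigl(-\epsilon\inf_{w\in\alpha}b(w)\Bigr)
\end{equation*}
is false as stated (take $x=y$: the left side vanishes, the right side does not); the correct form is the paper's
$d_\epsilon(x,y)\asymp_M e^{-\epsilon(x|y)_b}\min\{1/2,\epsilon|x-y|\}$, and the $\min$ factor is not cosmetic --- it is what lets the paper split the injectivity argument into the two cases $\epsilon|u_n-v_n|\leq 1/2$ and $>1/2$. Finally, your concluding claim that $\phi$ is a homeomorphism (via a visual-metric identification) goes beyond what the theorem asserts and beyond what the paper proves; as written it is an unsubstantiated strengthening, not a substitute for the missing existence and compactness arguments above.
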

\begin{remark}
Note that, even though Butler's work did not use any properness arguments, he considered geodesic metric spaces, whereas our work deals with spaces that are neither proper nor geodesic and his work relies heavily on tripod maps which are not available in non-geodesic metric spaces. 
\end{remark}
We realize in the study of non-proper Gromov hyperbolic spaces which are not necessarily geodesic some concepts  are more complicated than in the classical case, such as the center of an $h-$short triangle consists of three arcs instead of three points as in the classical case. But, as bounded distortions do not affect the structure of the space in large scale, we emphasize that the study of more general metric spaces is more suited to the philosophy of Gromov hyperbolic spaces.
\section{Preliminaries}
A metric space $X $ is said to be \textit{intrinsic }if 
$$|x-y|=\inf\left\{l(\alpha)|\alpha: x\cra y\right\},$$
for all $x, y\in X$. Let $h\geq 0$. An arc $\alpha:x\cra y$ is \textit{$h-$short} if
$$l(\alpha)\leq |x-y|+h.$$
\textit{Geodesics} are the isometric image of an interval in $\mathbb{R}$. We observe that $\alpha $ is a geodesic if and only if it is 0-short and a space $X$ is intrinsic if and only if for each pair $x, y\in X$ and for every $h>0$ there is an $h-$short arc connecting them. 
It is easy to see that every subarc of an $h-$short arc is $h-$short. 
For any curve $\gamma$, and for $u, v\in \gamma$, we use $\gamma[u,v]$ denotes the subcurve of $\gamma$ joining $u$ and $v$. For any rectifiable curve $\gamma:[a, b] \rightarrow X$, there is an arclength parametrization $\psi:[0, l(\gamma)] \rightarrow \gamma$ such that $l\left(\gamma(s, t)\right) = |s-t|$ for every $s,  t\in [0, l(\gamma)]$. We frequently use this fact in our proofs.
\begin{definition}\label{allu_jose_02_0008}
Let $X$ be an rectifiably connected, incomplete metric space. It is called $A-$ uniform if every pair of points $x, y\in X$ can be joined by a curve $\gamma$ in $X$ satisfying 
\be \label{allu_jose_02_0009}
l(\gamma) \leq A |x-y|
\ee
and
\be \label{allu_jose_02_0010}
\min\left\{l\left(\gamma[x, z]\right), l\left(\gamma[z, y]\right)\right\}\leq A \delta(z)
\ee
where $\delta(z)$ is the distance of $z$ to the boundary $\partial X = \bar{X}\setminus X$ of $X$.
\end{definition}
The condition \eqref{allu_jose_02_0009} is known as quasiconvexity of the curve and \eqref{allu_jose_02_0010} is known as the double cone arc condition.
We employ the standard notations of representing balls in $X$ as
$$B(a, r) = \{x\in X: |x-a|<r\} \text{ and } \bar{B}(a, r)= \{x\in X: |x-a|\leq r\}.$$
For a non empty set $A\subset X$, $N_r(A)$ be the $r-$neighbourhood of $A$ defined as 
$$N_r(A) = \{x\in X : d(x, A) <r\}.$$
\subsection{Gromov hyperbolic spaces}
\begin{definition}
For $x, y, p\in X$ we define the \textit{Gromov product} $(x|y)_p$ by 
$$2(x|y)_p = |x-p|+|y-p|-|x-y|.$$
Let $\delta \geq 0$. A space is \textit{Gromov $\delta-$hyperbolic} if 
$$(x|z)_p \geq \min\left\{(x|y)_p , (y|z)_p\right\} -\delta$$
for all $x, y, z, p\in X$. A space is \textit{hyperbolic} if it is Gromov $\delta-$hyperbolic for some $\delta\geq 0$.
\end{definition}
A classical example of a hyperbolic space is the Poincar\'{e} half space in $\R^n$ with the hyperbolic metric. 
In general,  V{\"a}is{\"a}l{\"a} \cite{vaisala_2004} has shown that uniform domains equipped with the quasihyperbolic metric in arbitrary Banach spaces are hyperbolic. Bounded spaces are trivially hyperbolic hence we make the assumption that every Gromov hyperbolic spaces we consider in this paper are unbounded.
\begin{definition}
Let $X$ be a metric space and let $o\in X$ be fixed. We say that a sequence $\{x_n\} $ in $X$ is a Gromov sequence if $(x_i|x_j)_o \rightarrow \infty$ as $i\rightarrow \infty$ and $j\rightarrow \infty$. 
\end{definition}
We say that two Gromov sequences $\{x_n\}$ and $\{y_n\}$ are equivalent if $(x_i|y_i)_o \rightarrow \infty $ as $i\rightarrow \infty$. In a hyperbolic space this is indeed an equivalence relation.
Let $\hat{x} $ be the equivalence class containing the Gromov sequence $\{x_n\} $ . The Gromov boundary $\delta_\infty X $ is defined as 
\be \label{allu_jose_02_0011}
\delta_\infty X = \left\{ \hat{x}: \{x_n\}  \text{ is a Gromov sequence in } X\right\}.
\ee
For $\zeta, \eta \in \delta_\infty X$ we define
$$(\zeta |\eta)_o = \inf \left\{\liminf_{i,j\rightarrow \infty} (x_i|y_j)_o: \{x_i\}\in \zeta, \{y_i\}\in \eta \right\}.$$
\begin{lemma}\label{allu_jose_02_0012}
Let $X$ be a $\delta-$hyperbolic space with $z, o\in X$ and $\zeta, \eta \in \delta_\infty X$. Then for $\{x_i\}\in \zeta$ and $\{y_i\}\in \eta$, we have 
$$(\zeta | \eta)_o \leq \liminf(x_i|y_i)_o \leq \limsup (x_i|y_i)_o \leq (\zeta | \eta)_o+2\delta ,$$
$$(\zeta | z)_o \leq \liminf(x_i|z)_o \leq \limsup (x_i|z)_o \leq (\zeta | z)_o+\delta .$$
\end{lemma}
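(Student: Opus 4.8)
The plan is to split each of the two displayed chains into its trivial left half and its substantive right half, and to route the diagonal single-index quantities $\liminf_i(x_i|y_i)_o$, $\limsup_i(x_i|y_i)_o$ through the double-indexed quantities $\liminf_{i,j}(x_i|y_j)_o$ and $\limsup_{i,j}(x_i|y_j)_o$. The left inequality $(\zeta|\eta)_o \le \liminf_i(x_i|y_i)_o$ is essentially definitional: for any fixed representatives the diagonal dominates the full double liminf, $\liminf_{i,j}(x_i|y_j)_o \le \liminf_i(x_i|y_i)_o$, while $(\zeta|\eta)_o \le \liminf_{i,j}(x_i|y_j)_o$ because $(\zeta|\eta)_o$ is the infimum of the latter over all representatives; the middle inequality is vacuous. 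So the entire first chain will follow once I prove the sharper right-hand bound $\limsup_{i,j}(x_i|y_j)_o \le (\zeta|\eta)_o + 2\delta$ and then observe the sandwich $\liminf_{i,j} \le \liminf_i(x_i|y_i)_o \le \limsup_i(x_i|y_i)_o \le \limsup_{i,j}$.

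Two tools feed the right-hand bound. First, iterating the $\delta$-hyperbolicity inequality twice gives the four-point inequality
\[
(a|d)_o \ \ge\ \min\{(a|b)_o,\,(b|c)_o,\,(c|d)_o\} - 2\delta
\]
for all $a,b,c,d$. Second, if $\{x_i\}$ and $\{a_i\}$ are equivalent Gromov sequences (both in $\zeta$), then $(x_i|a_j)_o \to \infty$ as $i,j \to \infty$ \emph{independently}; this follows from $(x_i|a_j)_o \ge \min\{(x_i|a_i)_o,(a_i|a_j)_o\}-\delta$, where the first term blows up by equivalence and the second by the Gromov condition. This blow-up is what lets me couple two different representatives, and it is the crucial point because a Gromov sequence has \emph{bounded} product with any fixed interior point, so naive index-fixing would fail.

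Now I would argue as follows. Fix representatives $\{x_i\}\in\zeta$, $\{y_i\}\in\eta$, and for $\epsilon>0$ choose near-optimal $\{a_i\}\in\zeta$, $\{b_i\}\in\eta$ with $\liminf_{k,l}(a_k|b_l)_o < (\zeta|\eta)_o+\epsilon$. Applying the four-point inequality to $a_k,x_i,y_j,b_l$ yields $(a_k|b_l)_o \ge \min\{(a_k|x_i)_o,(x_i|y_j)_o,(y_j|b_l)_o\}-2\delta$. Since $(a_k|x_i)_o\to\infty$ and $(y_j|b_l)_o\to\infty$ as the indices grow, while $(x_i|y_j)_o$ stays controlled, for all sufficiently large $i,j,k,l$ the minimum equals $(x_i|y_j)_o$, so $(a_k|b_l)_o \ge (x_i|y_j)_o-2\delta$. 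Taking $\sup_{i,j}$ and $\inf_{k,l}$ over indices beyond a threshold and sending the threshold to infinity gives $\limsup_{i,j}(x_i|y_j)_o \le \liminf_{k,l}(a_k|b_l)_o + 2\delta < (\zeta|\eta)_o + \epsilon + 2\delta$, and letting $\epsilon\to 0$ delivers the bound. I expect the main obstacle to be exactly this bookkeeping: keeping the constant at $2\delta$ forces the comparison to be $\limsup$-of-one-representative against $\liminf$-of-another (a liminf-to-liminf comparison would waste a further $2\delta$), and one must let all four indices tend to infinity together, which is legitimate only because of the independent blow-up fact above.

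The second chain is the same argument with one hyperbolicity step instead of two. Interpreting $(\zeta|z)_o = \inf\{\liminf_i(x_i|z)_o : \{x_i\}\in\zeta\}$, the left inequality is again immediate, and for the right one I apply the plain inequality $(a_k|z)_o \ge \min\{(a_k|x_i)_o,(x_i|z)_o\}-\delta$ to representatives $\{x_i\},\{a_k\}$ of $\zeta$. Here boundedness is automatic since $(x_i|z)_o \le |z-o|$, so for large $i,k$ the cross term $(a_k|x_i)_o$ dominates and $(a_k|z)_o \ge (x_i|z)_o-\delta$; passing to $\sup_i$, $\inf_k$ and then to the infimum over $\{a_k\}$ gives $\limsup_i(x_i|z)_o \le (\zeta|z)_o+\delta$, completing the sandwich with constant $\delta$ as stated.
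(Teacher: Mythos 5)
The paper never proves this lemma: it is recorded as a known fact about Gromov products and boundary points (it can be found in V\"ais\"al\"a \cite{vaisala_2004} and in Buyalo--Schroeder \cite{buyalo}), so there is no argument of record to compare yours against; what follows judges your proposal on its own. Your route is the standard one and is essentially right: the left-hand inequalities are definitional exactly as you say; the iterated hyperbolicity (four-point) inequality is correct; and you correctly isolate and prove the key fact that for two representatives of the \emph{same} boundary point the cross-products $(x_i|a_j)_o$ blow up jointly in $(i,j)$, which is indeed what legitimizes letting all four indices grow together. Your second chain is complete as written, since there the needed boundedness is supplied by $(x_i|z)_o\le |z-o|$.

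The one step you assert rather than prove is that ``$(x_i|y_j)_o$ stays controlled'' in the first chain, and as stated it is not always true: if $\zeta=\eta$, then by your own tool (b) the cross-products $(x_i|y_j)_o$ blow up. In that case every quantity in the first chain, including $(\zeta|\eta)_o$ itself, equals $+\infty$ and the inequality is vacuous, but this case split should be made explicit. When $\zeta\ne\eta$ the boundedness you invoke is true, but it is itself a small lemma requiring one more use of hyperbolicity (if $(x_{i_m}|y_{j_m})_o\to\infty$ along a subsequence, a further four-point estimate forces $(x_i|y_i)_o\to\infty$, i.e.\ $\zeta=\eta$). Alternatively you can bypass boundedness entirely by truncating: whenever $(a_k|x_i)_o\ge M$ and $(y_j|b_l)_o\ge M$, the four-point inequality gives $\min\{M,(x_i|y_j)_o\}\le (a_k|b_l)_o+2\delta$; assuming $(\zeta|\eta)_o<\infty$ (else the claim is trivial), your near-optimal pair has $I:=\liminf_{k,l}(a_k|b_l)_o<\infty$, and choosing $M=I+2\delta+1$ makes the truncation inactive for all large $i,j,k,l$, yielding $\limsup_{i,j}(x_i|y_j)_o\le I+2\delta$ with no case analysis at all. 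With either repair your proof is complete; the gap is genuine but minor and fixable entirely within your framework.
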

We need the following result by  V{\"a}is{\"a}l{\"a} \cite[Tripod lemma 2.15]{vaisala_2004}.
\begin{lemma} \label{allu_jose_02_0013}
Suppose that $\alpha_i :a\cra b_i, i=1,2,$ are $h-$short arcs in a $\delta-$hyperbolic space. Let $x_1\in \alpha_1$ be a point with $|x_1 -a| \leq (b_1|b_2)_a$, and let $x_2, x_2' \in \alpha_2$ be points with $|x_2-a|=|x_1-a|$ and $l\left(\alpha_2[a, x_2']\right) = l\left(\alpha_1[a, x_1]\right)$. Then
$$|x_1-x_2| \leq 4\delta +h, |x_1-x_2'| \leq 4\delta+2h.$$
\end{lemma}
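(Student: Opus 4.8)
The plan is to work entirely with Gromov products based at $a$ and to convert the conclusion into a product estimate only at the very end. The starting point is a general estimate for a single $h$-short arc: if $\beta : a \cra b$ is $h$-short and $z \in \beta$, then $l(\beta) = l(\beta[a,z]) + l(\beta[z,b]) \ge |a-z| + |z-b|$, and combining this with $l(\beta) \le |a-b| + h$ gives $|a-z| + |z-b| \le |a-b| + h$, whence
\begin{equation*}
(z \mid b)_a = \tfrac{1}{2}\big(|a-z| + |a-b| - |z-b|\big) \ge |a-z| - \tfrac{h}{2}.
\end{equation*}
Applying this to $\alpha_1$ at $z = x_1$ and to $\alpha_2$ at $z = x_2$ yields $(x_1 \mid b_1)_a \ge |x_1 - a| - h/2$ and $(x_2 \mid b_2)_a \ge |x_2 - a| - h/2 = |x_1 - a| - h/2$, the last equality being the hypothesis $|x_2 - a| = |x_1 - a|$.

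The core of the argument is then a two-fold use of $\delta$-hyperbolicity to slide from $x_1$ across $b_1$ and $b_2$ to $x_2$. Invoking hyperbolicity with the vertex $b_2$ together with the hypothesis $|x_1 - a| \le (b_1 \mid b_2)_a$ gives
\begin{equation*}
(b_1 \mid x_2)_a \ge \min\{(b_1 \mid b_2)_a,\, (b_2 \mid x_2)_a\} - \delta \ge |x_1 - a| - \tfrac{h}{2} - \delta ,
\end{equation*}
and a second application with the vertex $b_1$ gives $(x_1 \mid x_2)_a \ge \min\{(x_1 \mid b_1)_a,\, (b_1 \mid x_2)_a\} - \delta \ge |x_1 - a| - \tfrac{h}{2} - 2\delta$. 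Because $|x_2 - a| = |x_1 - a|$, the definition of the Gromov product rearranges to $|x_1 - x_2| = 2|x_1 - a| - 2(x_1 \mid x_2)_a$, and substituting the previous bound yields $|x_1 - x_2| \le 4\delta + h$, which is the first claim.

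For the second claim I would relate $x_2$ and $x_2'$ directly along $\alpha_2$. Both arclengths $l(\alpha_2[a, x_2])$ and $l(\alpha_2[a, x_2']) = l(\alpha_1[a, x_1])$ lie in $[\,|x_1 - a|,\, |x_1 - a| + h\,]$: the lower bounds hold because arclength dominates distance (and $|a - x_2| = |x_1 - a| = |a - x_1| \le l(\alpha_1[a, x_1])$), while the upper bounds hold because each of these subarcs is $h$-short. Hence the two arclengths differ by at most $h$, and since $x_2, x_2'$ are nested points of $\alpha_2$ measured from $a$, this forces $|x_2 - x_2'| \le l(\alpha_2[x_2, x_2']) \le h$; the triangle inequality then gives $|x_1 - x_2'| \le |x_1 - x_2| + |x_2 - x_2'| \le 4\delta + 2h$. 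The one place I would take real care is the two-step hyperbolicity chain: one must route through $b_2$ first and $b_1$ second, inserting the hypothesis $|x_1 - a| \le (b_1 \mid b_2)_a$ at exactly the right place, so that the two $\delta$-losses accumulate to the sharp constant $4\delta$ while the short-arc estimates contribute only the $h/2$ terms (and the extra $h$ in the second inequality).
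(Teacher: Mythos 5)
Your proof is correct. The paper itself gives no proof of this lemma --- it is imported verbatim from V\"ais\"al\"a's \emph{Gromov hyperbolic spaces} (the Tripod Lemma 2.15) --- and your argument is essentially V\"ais\"al\"a's own: the observation that an interior point $z$ of an $h$-short arc $\beta: a \cra b$ satisfies $(z|b)_a \geq |z-a| - h/2$, followed by the hyperbolicity inequality applied along the chain $x_1, b_1, b_2, x_2$ based at $a$ (you write it as two successive applications, V\"ais\"al\"a as one four-point chain, which is the same thing), the identity $|x_1 - x_2| = 2|x_1-a| - 2(x_1|x_2)_a$, and finally the arclength comparison along $\alpha_2$ giving $|x_2 - x_2'| \leq h$.
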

\begin{definition}[Triangles]\label{allu_jose_02_0014}
By an \textit{$h-$short triangle} we mean a triple of $h-$short arc $\alpha : y\cra z$, $\beta : x\cra z$, $\gamma: x\cra y$. The points $x, y, z$ are the vertices and the arcs $\alpha, \beta, \gamma$ are the sides of the triangle $\Delta=\left(\alpha, \beta, \gamma\right)$. 
\end{definition}
Following the notations used by V{\"a}is{\"a}l{\"a} \cite{vaisala_2004}, let $x_\gamma, y_\gamma$ be the points in $\gamma $ such that $l(\gamma_x)= (y|z)_x$ and $l(\gamma_y)= (x|z)_y$, where $\gamma_x = \gamma[x, x_\gamma]$ and $\gamma_y = \gamma[y_\gamma, y]$. If we set $\gamma^*= \gamma[x_\gamma, y_\gamma]$ which is called the center of the side $\gamma$ in the triangle $\Delta$, then $\gamma$ is the union of three subarcs $\gamma_x, \gamma^*, \gamma_y$. 

\begin{definition}[Rips condition]
Let $\delta \geq 0$. 
A triangle $\Delta$ in a space $X$ is $\delta-$slim if each side $\tau$ of $\Delta$ is contained in $\bar{B}\left(|\Delta|\setminus \tau, \delta\right)$, where $|\Delta|= \alpha \cup \beta\cup \gamma$. 
We say that an intrinsic space $X$ is $(\delta, h)-$Rips, if every $h-$short triangle in $X$ is $\delta-$slim. 
\end{definition}
We also need the following lemma in establishing the Gehring-Hayman theorem.
\begin{lemma}[Projection lemma]\cite[Lemma 3.6]{vaisala_2004}\label{allu_jose_02_0015}
Suppose $X$ is an intrinsic $(\kappa, h)-$Rips space. Let $\gamma\subset X$ be an $h-$short arc and let $x_1, x_2\in X$ and $y_1, y_2\in \gamma$ be points such that
\begin{enumerate}
\item $|x_i-y_i| = d\left(x_i, \gamma\right)\geq R >0$ for $i=1, 2,$
\item $|x_1-x_2| < 2R-4\kappa -h$.
\end{enumerate}
Then $|y_1-y_2| \leq 8\kappa +2h$.
\end{lemma}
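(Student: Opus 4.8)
The plan is to realize $y_1,y_2$ as (near) nearest-point projections and to exploit the Rips condition through a thin-quadrilateral argument, using the separation hypothesis (2) to force the subarc of $\gamma$ between $y_1$ and $y_2$ to be shadowed by the two projection arcs rather than by the arc joining $x_1$ to $x_2$. First I would fix $h$-short arcs $\beta_1:x_1\cra y_1$, $\beta_2:x_2\cra y_2$, $\alpha:x_1\cra x_2$, together with an $h$-short diagonal $\theta:x_1\cra y_2$; all of these exist since $X$ is intrinsic, and the subarc $\gamma[y_1,y_2]$ is $h$-short because it is a subarc of the $h$-short arc $\gamma$. The arcs $\beta_1,\alpha,\beta_2,\gamma[y_1,y_2]$ bound a quadrilateral which $\theta$ splits into the two $h$-short triangles $(\alpha,\beta_2,\theta)$ and $(\theta,\gamma[y_1,y_2],\beta_1)$. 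Applying the $(\kappa,h)$-Rips slimness hypothesis to each triangle in turn gives the standard conclusion that every point $w\in\gamma[y_1,y_2]$ lies within distance $2\kappa$ of $\alpha\cup\beta_1\cup\beta_2$: slimness of the second triangle places $w$ within $\kappa$ of $\theta\cup\beta_1$, and slimness of the first triangle moves any nearby point of $\theta$ within a further $\kappa$ of $\alpha\cup\beta_2$.

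The decisive step is to rule out the side $\alpha$. Every point of $\gamma$ is at distance at least $R$ from $x_1$ and from $x_2$ by hypothesis (1), whereas any $a\in\alpha$ satisfies $|x_1-a|+|x_2-a|\le l(\alpha)\le |x_1-x_2|+h$. If some $w\in\gamma[y_1,y_2]$ were within $2\kappa$ of such an $a$, then $2R\le |x_1-w|+|x_2-w|\le |x_1-a|+|x_2-a|+4\kappa\le |x_1-x_2|+h+4\kappa$, which contradicts hypothesis (2). Hence every $w\in\gamma[y_1,y_2]$ is within $2\kappa$ of $\beta_1\cup\beta_2$.

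Since $\gamma[y_1,y_2]$ is connected and is covered by the two relatively closed sets $\{w:d(w,\beta_1)\le 2\kappa\}$ and $\{w:d(w,\beta_2)\le 2\kappa\}$, which are nonempty because they contain $y_1$ and $y_2$ respectively, these sets must meet; I would choose $w_0$ in their intersection. Finally I would convert proximity to $\beta_i$ into proximity to $y_i$ using the projection property: if $b\in\beta_1$ with $|w_0-b|\le 2\kappa$, then since $\beta_1$ is $h$-short and $w_0\in\gamma$ forces $|x_1-w_0|\ge d(x_1,\gamma)=|x_1-y_1|$, one obtains $|x_1-b|\ge |x_1-y_1|-2\kappa$ and hence $|b-y_1|\le l(\beta_1)-|x_1-b|\le h+2\kappa$, so that $|w_0-y_1|\le 4\kappa+h$; symmetrically $|w_0-y_2|\le 4\kappa+h$. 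The triangle inequality then yields $|y_1-y_2|\le 8\kappa+2h$, as claimed.

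The main obstacle is the geometric insight in the second step, namely recognizing that the separation hypothesis (2) is precisely what is needed to exclude the connecting side $\alpha$, together with the connectedness argument that produces a single point $w_0$ close to \emph{both} projection arcs; once these are in place, the quadrilateral-slimness bookkeeping and the final length estimate are routine, and the constants fall out exactly as stated.
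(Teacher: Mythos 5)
Your proof is correct: the split of the quadrilateral $x_1 x_2 y_2 y_1$ along the diagonal $\theta$ into two $h$-short triangles, the exclusion of the side $\alpha$ via hypothesis (2) (the estimate $2R\le |x_1-a|+|x_2-a|+4\kappa\le |x_1-x_2|+h+4\kappa$ does contradict the strict inequality), the connectedness argument on $\gamma[y_1,y_2]$ producing a point $w_0$ within $2\kappa$ of both $\beta_1$ and $\beta_2$, and the conversion $|w_0-y_i|\le 4\kappa+h$ using $h$-shortness of $\beta_i$ together with $|x_i-w_0|\ge d(x_i,\gamma)=|x_i-y_i|$ all check out and yield exactly $|y_1-y_2|\le 8\kappa+2h$. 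Note that the paper itself contains no proof of this statement --- it is quoted from V\"ais\"al\"a \cite[Lemma 3.6]{vaisala_2004} --- and your argument is essentially the standard proof of that projection lemma (with the implicit, and here harmless, assumption $h>0$ guaranteeing the existence of the $h$-short arcs $\beta_1,\beta_2,\alpha,\theta$ in an intrinsic space), so there is nothing internal to compare it against.
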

In a proper, geodesic, hyperbolic space any point in the Gromov boundary and any point in $X$ can be joined by a geodesic ray. But in the case of general intrinsic hyperbolic spaces we do not have that luxury. V{\"a}is{\"a}l{\"a} \cite{vaisala_2004} introduced the concept of 'roads' to overcome this difficulty.
\begin{definition}
Let $X$ be a metric space and let $\mu \geq 0$, $h\geq 0$. A $(\mu, h)-$road in $X$ is a sequence $\bar{\alpha}$ of arcs $\alpha_i:y\cra u_i$ with the following properties.
\begin{enumerate}
\item Each $\alpha_i$ is $h-$short.
\item The sequence of lengths $l(\alpha_i)$ is increasing and tends to $\infty$.
\item For $i\leq j$, the length map $g_{ij}:\alpha_i \rightarrow \alpha_j$ with $g_{ij}y=y$ satisfies $|g_{ij}x-x|\leq \mu $ for all $x\in \alpha_i$.
\end{enumerate}
\end{definition}
It is possible to join any point in $X$ to any point in the boundary $\delta_\infty X$ by $(4\delta+2h, h)$-road.
\subsection{Busemann functions}
We refer the reader to \cite[Chapter 3]{buyalo} for a detailed discussion on the topic. 
\begin{notation}
Let $a, b, C$ be constants, following the notations in \cite{buyalo}, we write $a\dot{=}_C b$ if $|a-b|\leq C$. If $a, b>0$ and $C\geq 1$ then we write $a\asymp_C b $ instead of $1/C b \leq a \leq C b$. Also, we sometimes avoid mentioning the constant $C$ if it is not important, and just write $\dot{=}, \asymp$ instead of $\dot{=}_C, \asymp_C$, respectively.
\end{notation}
Every Busemann function $b \in \mathcal{B}(\omega)$ is roughly 1-Lipschitz, that is
\be \label{allu_jose_02_0016}
|b(x) - b(x')| \leq |x-x'|+ 10 \delta 
\ee
for every $x, x'\in X$. Also, for every $\omega \in\delta_\infty X$, every sequence $\{u_n\}\in \omega$ and every $o, x \in X$ it is known that (see \cite[Lemma 3.1.1]{buyalo})
\be \label{allu_jose_02_0017}
b_\omega(x, o) \dot{=}_{2\delta} (u_n|o)_x - (u_n|x)_o = |u_n-x|-|u_n-o|.
\ee
Applying \eqref{allu_jose_02_0016}, we derive the following Harnack's type inequality:
\begin{equation}\label{allu_jose_02_0018}
e^{-10\epsilon \delta}\exp\{-\epsilon|x-y|\} \leq \frac{\rho_\epsilon(x)}{\rho_\epsilon(y)}\leq e^{10\epsilon \delta}\exp\{\epsilon|x-y|\}
\end{equation}
Busemann functions can be used to define a Gromov product based at $\omega \in \delta_\infty X$. 
For a fixed Busemann function $b\in \mathcal{B}(\omega)$ and $x, y\in X$ the product is defined as 
\begin{equation*}
2(x|y)_b= b(x)+b(y)-|x-y|. 
\end{equation*}
Note that here $(x|y)_b$ can be negative, contrary to the standard case. Also, from the discussion in \cite[Section 3.2]{buyalo} we obtain that
\be \label{3.6}
(x|y)_b \dot{=}_{10\delta} (x|y)_o - (x|\omega)_o - (w|y)_o
\ee
for $b\in \mathcal{B}(\omega)$.
\section{Gehring-Hayman Theorem}
Let $X$ be an intrinsic, $(\kappa, h)-$Rips space, and let $\rho :X\ra (0, \infty)$ be a continuous function that satisfies \eqref{allu_jose_02_0003}
for all $x,y \in X$ and $\epsilon >0$. It is easy to verify that such a continuous function $\rho$ determines a metric space $(X, d_\rho),$ where the new metric is given by 
$$d_\rho(x, y) = \inf \int_\gamma \rho \ds;$$
and the infimum is taken over all rectifiable curves $\gamma$ joining $x$ and $y$. Also, if the identity map $\left(X, |.|\right) \ra \left(X, l\right)$, where $l(x, y)= \inf_\gamma l(\gamma)$, is a homeomorphism, then for each rectifiable curve $\gamma$ then length of $\gamma$  in the metric space $(X, d_\rho)$ denoted by $l_\rho$ is given by
$$l_\rho(\gamma)=\int_\gamma \rho \ds.$$
Note that, the $(\delta, h)$-Rips condition is quantitatively equivalent to $\delta-$hyperbolicity. Precisely, a $\delta-$hyperbolic space is $(\kappa, h)$ Rips with $\kappa= 3\delta+3h/2$ (see \cite[Theorem 2.34, Theorem 2.35]{vaisala_2004}. Hence, in this section we suppose that $X$ is $(\kappa, h)$-Rips and obtain the constants depending on $\kappa$ and $h$. For a $\delta-$hyperbolic space one can always obtain the constants in terms of $\delta$ by simply substituting $\kappa=3\delta+3h/2$.

The proof of Theorem \ref{allu_jose_02_0002} is accomplished through a series of lemmas. We employ techniques similar to those used by Bonk et al. \cite[Chapter 5]{BHK}. However, the lack of geodesics complicates the proofs significantly. We note that in the absence of geodesics, $h-$short arcs exhibit properties akin to geodesics if we select a sufficiently small $h>0$.
\begin{lemma}\label{allu_jose_02_0019}
Let $\gamma :x\cra y $ be a curve in $X$ satisfying 
\be \label{allu_jose_02_0020}
|x-y| \leq L=\frac{1}{24\lambda^2 \epsilon}
\ee
and 
\be \label{allu_jose_02_0021}
l(\gamma) \geq 6\lambda^2  |x-y|.
\ee
Then $l_\rho(\alpha) \leq l_\rho(\gamma)$ where $\alpha:x\cra y $ is such that $l(\alpha)\leq 2|x-y|$.
\end{lemma}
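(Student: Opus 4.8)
The plan is to control the $\rho$-length of the short arc $\alpha$ by exploiting that the Harnack-type inequality \eqref{allu_jose_02_0003} makes $\rho$ nearly constant on a set whose diameter is comparable to $L$. First I would estimate the oscillation of $\rho$ on the relevant region. Since both $\alpha$ and $\gamma$ join $x$ to $y$, and by hypothesis $|x-y|\leq L = \frac{1}{24\lambda^2\epsilon}$, any point $z$ on $\alpha$ satisfies $|z-x|\leq l(\alpha)\leq 2|x-y|\leq 2L$. Feeding this into \eqref{allu_jose_02_0003} with the base point $x$ gives a two-sided bound
$$
\frac{1}{\lambda}e^{-2\epsilon L}\,\rho(x)\leq \rho(z)\leq \lambda e^{2\epsilon L}\,\rho(x)
$$
for every $z\in\alpha$, and since $2\epsilon L = \frac{1}{12\lambda^2}$ is a small absolute constant, the exponential factor $e^{2\epsilon L}$ is bounded by a universal number close to $1$. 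Integrating along $\alpha$ then yields an upper bound of the form $l_\rho(\alpha)\leq \lambda e^{2\epsilon L}\,\rho(x)\, l(\alpha)\leq 2\lambda e^{2\epsilon L}\,\rho(x)\,|x-y|$.

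Next I would produce a matching lower bound for $l_\rho(\gamma)$. The key point is that $\gamma$ is forced to be long, namely $l(\gamma)\geq 6\lambda^2|x-y|$ by \eqref{allu_jose_02_0021}. However, a curve of large Euclidean length can in principle wander far from $x$, where $\rho$ may be small, so I cannot simply bound $\rho$ from below on all of $\gamma$ by its value near $x$. To handle this I would split $\gamma$ into the portion lying inside a ball $B(x,r)$ of suitable radius $r$ comparable to $L$ and the portion outside it. On the inner portion $\rho$ is again nearly constant by \eqref{allu_jose_02_0003}, giving a lower bound $\rho(z)\geq \frac{1}{\lambda}e^{-\epsilon r}\rho(x)$; if the inner portion of $\gamma$ already has Euclidean length at least $3\lambda^2|x-y|$, say, then integrating there produces $l_\rho(\gamma)\geq \frac{1}{\lambda}e^{-\epsilon r}\rho(x)\cdot 3\lambda^2|x-y|$, which beats the upper bound for $l_\rho(\alpha)$ once the constants are tallied. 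If instead $\gamma$ leaves $B(x,r)$ quickly, I would use the fact that the exit point is at Euclidean distance $r$ from $x$ while still applying \eqref{allu_jose_02_0003} across that gap, again obtaining a lower bound for the $\rho$-mass of the outer excursion.

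The main obstacle I anticipate is the bookkeeping in the second step: choosing $r$ and the splitting threshold so that \emph{in both cases} the resulting lower bound for $l_\rho(\gamma)$ dominates the upper bound $2\lambda e^{2\epsilon L}\rho(x)|x-y|$ for $l_\rho(\alpha)$, using only the two hypotheses \eqref{allu_jose_02_0020} and \eqref{allu_jose_02_0021} and the specific value $L=\frac{1}{24\lambda^2\epsilon}$. The factors of $\lambda$ and the exponentials must be reconciled against the numerical constants $6\lambda^2$ and $24\lambda^2$, and the whole point of the carefully chosen value of $L$ is to make $\epsilon L$ small enough that every exponential correction stays within a fixed tolerance. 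Once the two comparisons are arranged so that the ratio of lower bound to upper bound exceeds $1$, the desired inequality $l_\rho(\alpha)\leq l_\rho(\gamma)$ follows directly, with no additional multiplicative constant, matching the clean statement of the lemma.
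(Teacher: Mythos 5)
Your upper bound for $l_\rho(\alpha)$ is fine and is essentially the paper's: every point of $\alpha$ is within distance $l(\alpha)\leq 2|x-y|\leq 2L$ of $x$, so \eqref{allu_jose_02_0003} gives $l_\rho(\alpha)\leq 2\lambda e^{2\epsilon L}\rho(x)|x-y|$ (the paper instead integrates $\lambda\rho(x)e^{\epsilon t}$ in arclength and gets $3\lambda\rho(x)|x-y|$). The genuine gap is in the lower bound for $l_\rho(\gamma)$. The worry that $\gamma$ may wander far from $x$, where $\rho$ is small, has a one-line resolution that your ball decomposition misses: parametrize $\gamma$ by arclength from $x$; then $|x-\gamma(t)|\leq t$ for every $t$, so \eqref{allu_jose_02_0003} gives the pointwise bound $\rho(\gamma(t))\geq\frac{1}{\lambda}e^{-\epsilon t}\rho(x)$ along the \emph{entire} curve, wherever it goes --- wandering costs arclength, and the exponential decay is in the arclength variable, not in position. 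Integrating over the initial segment $0\leq t\leq 6\lambda^2|x-y|$, which exists by \eqref{allu_jose_02_0021}, and using $1-e^{-u}\geq\frac{2}{3}u$ for $0\leq u\leq\frac14$ together with $6\lambda^2\epsilon|x-y|\leq\frac14$ from \eqref{allu_jose_02_0020}, yields
\[
l_\rho(\gamma)\;\geq\;\frac{\rho(x)}{\lambda}\int_0^{6\lambda^2|x-y|}e^{-\epsilon t}\dt\;=\;\frac{\rho(x)}{\lambda\epsilon}\left(1-e^{-6\lambda^2\epsilon|x-y|}\right)\;\geq\;4\lambda\rho(x)|x-y|\;\geq\;l_\rho(\alpha),
\]
with no ball, no case split, and comfortable margins. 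This is exactly the paper's argument.

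Your two-case plan is not merely unfinished bookkeeping; as stated, its constants do not close. In your second case the $\rho$-mass of the part of $\gamma$ outside $B(x,r)$ can only be controlled by this same arclength device (a point at arclength $s$ past an exit point is at distance at most $r+s$ from $x$), and since the outer part may split into many excursions you also need subadditivity of $u\mapsto 1-e^{-u}$; carrying this out with your threshold $3\lambda^2|x-y|$ and the standard estimate $1-e^{-u}\geq\frac23 u$ gives only $2\lambda e^{-\epsilon r}\rho(x)|x-y|$, which is strictly below your upper bound $2\lambda e^{2\epsilon L}\rho(x)|x-y|$ for \emph{every} $r>0$. The argument can be rescued --- either by sharper numerical inequalities on the smaller range $u\leq\frac18$, or by taking $r\approx 3\lambda^2 L$ (note: comparable to $\lambda^2L$, not to $L$), in which case the segment of $\gamma$ up to its first exit already has length $\geq 3\lambda^2|x-y|$ and the second case disappears --- but every such repair consists of reinstating the exponential-in-arclength bound, at which point the decomposition is superfluous. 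The missing idea, in short, is the inequality $|x-\gamma(t)|\leq t$ combined with integrating the resulting bound $\frac{1}{\lambda}e^{-\epsilon t}\rho(x)$ over the full guaranteed length $6\lambda^2|x-y|$, rather than using a worst-case constant lower bound for $\rho$ on a ball.
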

\begin{proof}
We compute,
\begin{align*}
l_\rho(\alpha) &=\int_\alpha \rho \ds\\
&\leq \lambda \rho(x)\int_0^{2|x-y|}e^{\epsilon t} \dt\\
& \leq \lambda \rho(x) \frac{1}{\epsilon} \left(e^{2\epsilon|x-y|}-1\right)
\end{align*}
Since, $e^x-1\leq \frac{3}{2}x $ whenever $0\leq x \leq \frac{1}{4}$ and  from \eqref{allu_jose_02_0020} we obtain
$$l_\rho(\alpha) \leq 3\lambda \rho(x)|x-y|.$$ 
Meanwhile, using \eqref{allu_jose_02_0021} we calculate
\begin{align*}
l_\rho(\gamma) &\geq \frac{\rho(x)}{\lambda}\int_0^{6\lambda^2|x-y|}e^{-\epsilon t}\dt\\
&=\frac{\rho(x)}{\lambda}\frac{1}{\epsilon} \left(1- e^{-\epsilon 6\lambda^2|x-y|}\right)\\
&\geq 4\lambda\rho(x)|x-y|.
\end{align*}
The last step follows from \eqref{allu_jose_02_0020} and the fact that $1-e^{-x} \geq \frac{2}{3}x $ whenever $0\leq x\leq \frac{1}{4}$. 
Hence the proof of Lemma \ref{allu_jose_02_0019} is complete.
\end{proof}
\begin{lemma}\label{allu_jose_02_0022}
Let $\bar{\gamma} :x\cra y $ be any rectifiable curve in $X$ then there is a curve $\gamma:x\cra y$ with $l_\rho(\gamma) \leq l_\rho(\bar{\gamma)}$ such that
\be \label{allu_jose_02_0023}
l\left(\gamma[u, v]\right) \leq 6\lambda^2 |u-v|+1
\ee
whenever $|u-v|\leq \frac{1}{24\lambda^2 \epsilon}$.
\end{lemma}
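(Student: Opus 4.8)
The plan is to use Lemma \ref{allu_jose_02_0019} as a shortening engine and to apply it repeatedly to the ``bad'' pieces of $\bar\gamma$, i.e. those subarcs whose length is large compared with the distance between their endpoints. Throughout I parametrize the current curve by arclength, so that for $u,v$ on it one has $l\bigl(\sigma[u,v]\bigr)\ge|u-v|$, and I measure scales by the Euclidean distance $|u-v|$. Call a subarc $\sigma[u,v]$ \emph{bad} if $|u-v|\le L=\frac{1}{24\lambda^2\epsilon}$ and $l\bigl(\sigma[u,v]\bigr)\ge 6\lambda^2|u-v|$. Then \eqref{allu_jose_02_0023} asserts precisely that the final curve has no bad subarc, up to an additive error $1$.

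The basic move is the following. Given a bad subarc $\sigma[u,v]$, Lemma \ref{allu_jose_02_0019} applies (its hypotheses $|u-v|\le L$ and $l\bigl(\sigma[u,v]\bigr)\ge 6\lambda^2|u-v|$ are exactly the badness conditions) and produces $\alpha:u\cra v$ with $l(\alpha)\le 2|u-v|$ and $l_\rho(\alpha)\le l_\rho\bigl(\sigma[u,v]\bigr)$. Replacing $\sigma[u,v]$ by $\alpha$ therefore does not increase the $\rho$-length of the whole curve, while it strictly decreases the total arclength by at least $(6\lambda^2-2)|u-v|\ge 4\lambda^2|u-v|$. Since $X$ is intrinsic, as long as we only replace at scales $|u-v|\ge h$ we may take each $\alpha$ to be $h$-short, so that every replacement piece is itself quasiconvex: any subarc of it between points $u',v'$ has length at most $|u'-v'|+h\le|u'-v'|+1$. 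This is what controls windows lying inside a single replaced piece.

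Concretely, I would fix a scale cut-off $\theta\in[h,L]$ and replace only bad subarcs with $|u-v|\ge\theta$, selecting maximal such subarcs greedily from the endpoint $x$ towards $y$. Because the total arclength can decrease by at most $l(\bar\gamma)-|x-y|$ and each replacement removes at least $4\lambda^2\theta$ of it, only finitely many replacements occur, and the procedure terminates with a curve $\gamma$ that is a finite concatenation of retained ``good'' pieces and $h$-short replacement pieces sharing the original waypoints, with $l_\rho(\gamma)\le l_\rho(\bar\gamma)$. By construction no bad subarc of scale $\ge\theta$ survives.

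The main obstacle is the final \emph{globalization}: passing from this piecewise, scale-$\theta$ control to the single estimate \eqref{allu_jose_02_0023} for \emph{all} windows $\gamma[u,v]$ with $|u-v|\le L$ at once. Two difficulties arise. A window straddling a replacement boundary cannot be compared directly with the original curve; here I would use the maximality of the chosen subarcs (so that extending an already-processed window only annexes good material) together with the $h$-shortness of the replacement pieces to keep its length within $6\lambda^2|u-v|$ plus a bounded error. More seriously, windows of scale below $\theta$ cannot be removed by finitely many moves in a space that is neither proper nor geodesic, where no compactness or limiting-geodesic argument is available; these residual small-scale wiggles are exactly what force the additive constant in \eqref{allu_jose_02_0023}. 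The delicate quantitative point, and the heart of the proof, is to choose $\theta$ (and organize the greedy replacements) so that the boundary errors and the accumulated sub-cut-off excursions together contribute total length at most $1$, rather than merely a bounded amount, thereby yielding the ``$+1$''.
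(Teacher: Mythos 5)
Your replacement engine is the right one (Lemma \ref{allu_jose_02_0019} applied to a ``bad'' window, with intrinsicness supplying the competitor $\alpha$ satisfying $l(\alpha)\le 2|u-v|$), but the proof as written has a genuine gap, and you acknowledge it yourself: the entire globalization step --- windows straddling replacement boundaries, and windows of scale below the cut-off $\theta$ --- is announced as ``the heart of the proof'' and then not carried out. As it stands there is no argument that the greedy output satisfies \eqref{allu_jose_02_0023}: after a replacement, new bad windows can appear that mix replacement material with retained material, and your appeal to ``maximality plus $h$-shortness'' yielding ``a bounded error'' is not the required estimate (you need the specific additive constant $1$, not some bounded amount). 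Likewise the claim that the sub-cut-off excursions can be made to ``contribute total length at most $1$'' by tuning $\theta$ is unproved, and summation of small errors is not even the natural mechanism: a single retained sub-$\theta$ window could a priori be enormously long, and what would control it is an extension argument (extend the window, using continuity of $(s,t)\mapsto|\gamma(s)-\gamma(t)|$, until its endpoints are at distance exactly $\theta$; if no bad window of scale in $[\theta,L]$ survives, the extended window has length $<6\lambda^2\theta$, so choosing $\theta\le 1/(6\lambda^2)$ bounds the small window by $1$). With such a supplement your scheme could likely be pushed through, but termination and the straddling windows would still need careful bookkeeping.

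The paper avoids all of this iteration with a one-step variational argument, and it is worth internalizing because it explains where the ``$+1$'' really comes from. Let $\Gamma$ be the family of curves from $x$ to $y$ with $l_\rho(\gamma)\le l_\rho(\bar\gamma)$, let $S=\inf_{\gamma\in\Gamma}l(\gamma)<\infty$, and pick $\gamma\in\Gamma$ with $l(\gamma)\le S+\tfrac12$. If $\gamma$ had a window with $|u-v|<\frac{1}{24\lambda^2\epsilon}$ and $l\left(\gamma[u,v]\right)>6\lambda^2|u-v|+1$, then one application of your basic move produces $\gamma'=\gamma[x,u]\cup\alpha\cup\gamma[v,y]\in\Gamma$ with
\[
l(\gamma')\;=\;l(\gamma)-l\left(\gamma[u,v]\right)+l(\alpha)\;\le\; S+\tfrac12-\left(6\lambda^2|u-v|+1\right)+2|u-v|\;\le\; S-\tfrac12,
\]
contradicting the definition of $S$. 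So the near-minimizer itself is the desired curve: no iteration, no cut-off, no small-scale analysis. The additive $1$ in \eqref{allu_jose_02_0023} is exactly what lets a single swap beat the $\tfrac12$ slack in choosing the near-minimizer; it is not there to absorb accumulated small-scale errors.
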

\begin{proof}
Let $\Gamma$ be the family of all curves in $X$ joining $x$ and $y$ such that $l_\rho(\gamma)\leq l_\rho(\bar{\gamma})$  for any $\gamma \in \Gamma$.
\\[2mm]
Clearly $\bar{\gamma} \in \Gamma$ and hence 
$$S=\inf_{\gamma \in\Gamma} l(\gamma) <\infty.$$
Choose a curve $\gamma\in \Gamma$ such that $l(\gamma)\leq S+\frac{1}{2}$. 
We claim that $\gamma$ is the required curve. If not, then there exists $u,v\in \gamma$ such that 
$$|u-v| < \frac{1}{24\lambda^2 \epsilon}$$
but
$$l(\gamma[u, v]) > 6\lambda^2|u-v|+1.$$
Then by Lemma \ref{allu_jose_02_0019}, $l_\rho (\alpha) \leq l_\rho (\gamma[u,v]),$ for a curve $\alpha:u\cra v$ such that $l(\alpha)\leq 2|u-v|$. \\[2mm]
Construct a new curve $\gamma' = \gamma[x,u] \cup \alpha \cup \gamma[v,y].$
It is easy to verify that $\gamma'\in \Gamma$ and hence we obtain 
\begin{align*}
S\leq l(\gamma') &= l(\gamma[x,u])+l(\alpha)+l(\gamma[v,y])\\
&=l(\gamma)-l(\gamma[u,v])+l(\alpha)\\
&\leq S+\frac{1}{2}-\left(6\lambda^2|u-v|+1\right)+2|u-v|\\
&\leq S-\frac{1}{2}
\end{align*}
which is the desired contradiction and the proof is complete.
\end{proof}
\begin{lemma}\label{allu_jose_02_0024}
Let $M,C,L$ be positive numbers and let $\gamma :x\cra y$ be a rectifiable curve satisfying
\be \label{allu_jose_02_0025}
l\left(\gamma[u,v]\right)\leq M|u-v|+C
\ee
whenever $|u-v|\leq L$. Let $0<h<\min \left\{1, \frac{1}{1+2M}\right\}$ and if for some $h-$short arc $\beta$ it holds that
\be \label{allu_jose_02_0026}
\dist\left(\gamma, \beta\right)=\dist\left(x, \beta\right) =\dist\left(y, \beta\right) \geq R 
\ee
where $R=1+4\kappa+4\kappa M+2h$, then either
\be \label{lem3eq3}
|x-y|>L
\ee
or
\be \label{lem3eq4}
|x-y| \leq A(\kappa, h, M, C).
\ee
\end{lemma}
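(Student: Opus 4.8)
The plan is to prove the dichotomy directly: assuming the first alternative fails, i.e. $|x-y|\le L$, I would show that $|x-y|$ is bounded by a constant depending only on $\kappa,h,M,C$. The first move is to feed $u=x,\,v=y$ into the quasiconvexity hypothesis \eqref{allu_jose_02_0025}, which is legitimate precisely because $|x-y|\le L$, obtaining the global bound $l(\gamma)\le M|x-y|+C$. I would also dispose of the trivial range at once: if $|x-y|<2R-4\kappa-h$ then $|x-y|$ is already bounded by a constant of the allowed form, so from here on I assume $|x-y|\ge 2R-4\kappa-h$.

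The heart of the argument is a projection count built on the Projection Lemma \ref{allu_jose_02_0015}. Fix points $p_x,p_y\in\beta$ realizing (up to an arbitrarily small error) the distances from $x,y$ to $\beta$, write $D$ for the common value $\dist(x,\beta)=\dist(y,\beta)=\dist(\gamma,\beta)\ge R$, and set $P=|p_x-p_y|$. I would subdivide $\gamma$ by arclength into consecutive points $x=z_0,\dots,z_N=y$ with each $|z_i-z_{i+1}|<2R-4\kappa-h$; since every point of $\gamma$ lies at distance $\ge D\ge R$ from $\beta$, hypotheses (1)--(2) of Lemma \ref{allu_jose_02_0015} hold for consecutive pairs projected onto $\beta$, so the projections move by at most $8\kappa+2h$ per step. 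Summing and estimating $N\le l(\gamma)/(2R-4\kappa-h)+1$ yields a clean lower bound $l(\gamma)\ge \tfrac{s_0}{t_0}(P-t_0)$ with $s_0=2R-4\kappa-h$ and $t_0=8\kappa+2h$. The decisive arithmetic, and the only place where $h<1/(1+2M)$ is used, is the inequality $s_0>Mt_0$: substituting $R=1+4\kappa+4\kappa M+2h$ gives $s_0-Mt_0=2+4\kappa+3h-2hM$, and $2hM<1$ forces this to be positive. Thus the projection count produces a lower bound for $l(\gamma)$ that is \emph{linear in $P$ with slope exceeding $M$}.

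To convert this into a bound on $|x-y|$ I would combine three facts: the quasiconvexity bound $l(\gamma)\le M|x-y|+C$, the triangle inequality $|x-y|\le 2D+P$, and the slope-$>M$ lower bound on $l(\gamma)$. The obstruction is that these alone only constrain $P$ (giving $P\le c_1|x-y|+c_2$ with $c_1<1$) and leave $D$ free, so large $D$ must be excluded by a genuinely hyperbolic trapping estimate. Here I would show, using the $(\kappa,h)$-Rips condition together with the Tripod Lemma \ref{allu_jose_02_0013}, that an $h$-short arc $\sigma:x\cra y$ must dip to within $O(\kappa)$ of $\beta$ once $|x-y|$ is large; this produces a point $w\in\sigma$ with $\dist(w,\{x,y\})\ge D-O(\kappa)$ and $\gamma\cap B(w,D-O(\kappa))=\varnothing$. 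A doubling argument — iterating the Projection Lemma across dyadic scales, which substitutes for the unavailable exponential divergence of geodesics — then forces $l(\gamma)$ to grow super-linearly in $D$, so that $l(\gamma)\le M|x-y|+C$ caps $D$ by some $D_0(\kappa,h,M,C)$. With $D\le D_0$ in hand, substituting $|x-y|\le 2D+P$ into the slope-$>M$ inequality and solving for $|x-y|$ gives $|x-y|\le A(\kappa,h,M,C)$.

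I expect the main obstacle to be exactly this trapping step. In the geodesic, proper setting of Bonk--Heinonen--Koskela one simply projects onto the geodesic $[x,y]$ and invokes exponential divergence; here neither geodesics nor local compactness are at hand, so both the claim that $\sigma$ approaches $\beta$ and the super-linear length growth of curves avoiding a large ball must be reconstructed purely from the Rips, Tripod and Projection Lemmas for $h$-short arcs, with $h$ chosen small enough that these arcs behave like geodesics. Propagating the additive $\kappa$- and $h$-errors through the doubling while keeping the final constant of the form $A(\kappa,h,M,C)$ is the delicate bookkeeping I anticipate.
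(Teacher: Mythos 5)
Your projection count onto $\beta$ is sound and the slope inequality $s_0>Mt_0$ is the right kind of arithmetic, but the route you choose to close the argument has a genuine gap that the tools you invoke cannot repair. The problem is exactly the one you flag: projecting onto $\beta$ leaves the term $2D$ in $|x-y|\le 2D+P$ uncontrolled, and $D=\dist(\gamma,\beta)$ genuinely cannot be capped by any $D_0(\kappa,h,M,C)$ under the hypotheses of the lemma. Indeed, take $x,y$ at enormous distance $D$ from $\beta$ with nearly coincident projections and $|x-y|$ of bounded size, joined by a short arc $\gamma$ staying at their level (in $\mathbb{H}^2$: a short hypercycle arc at distance $D$ from a geodesic $\beta$); all hypotheses hold, $l(\gamma[u,v])\approx|u-v|$ for subarcs, and $D$ is arbitrary. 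So the cap on $D$ can only be conditional on $|x-y|$ being large, and that is precisely where your trapping claim breaks down: the statement that an $h$-short arc $\sigma:x\cra y$ must come within $O(\kappa)$ of $\beta$ once $|x-y|$ exceeds a threshold $T(\kappa,h,M,C)$ is not provable from the Rips and Tripod lemmas, because as a statement about $(x,y,\beta,\sigma)$ it is false whenever $D\gg L$: then $\sigma$ has length at most $|x-y|+h\le L+h$, so it stays in $B(x,L+h)$ and at distance at least $D-L-h$ from $\beta$. (The dip of $\sigma$ is governed by the separation $P$ of the projections, which your own count bounds via $l(\gamma)\le ML+C$, not by $|x-y|$.) In the end the bad regime is empty, but that emptiness \emph{is} the lemma; you cannot use it as an intermediate step. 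Moreover, even granting the trapping claim, no doubling argument is needed or helpful: a point $w\in\sigma$ with $\dist(w,\beta)=O(\kappa)$ immediately gives $D-O(\kappa)\le|x-w|\le l(\sigma)\le L+h$, i.e. $D\le L+h+O(\kappa)$, and the resulting bound on $|x-y|$ then depends on $L$. That is fatal for the intended use: the constant $A$ must depend only on $(\kappa,h,M,C)$, since Lemma \ref{ghlem4} defines $L:=2M(A+1)+1$ in terms of $A$, so any $L$-dependence of $A$ is circular.

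The paper closes the loop by changing the projection target rather than by bounding $D$. It takes $h$-short arcs $\beta_1:x\cra x_0$ and $\beta_2:y\cra y_0$ to nearest points of $\beta$, marks $x_1\in\beta_1$ and $y_1\in\beta_2$ with $|x-x_1|=|y-y_1|=R$, joins them by an $h$-short arc $\beta_3$, and shows via the Rips condition applied to the $h$-short rectangle $\beta_1[x_1,x_0]\cup\beta[x_0,y_0]\cup\beta_2[y_1,y_0]\cup\beta_3$ that $\dist(\gamma,\beta_3)\ge R-2\kappa-2h$. Projecting the subdivision points of $\gamma$ onto $\beta_3$ gives the same $8\kappa+2h$ per-step movement you have, but now the endpoint projections satisfy $|u_0-v_0|\le R$ and $|u_n-v_n|\le R$, so the chain inequality reads $|x-y|\le 2R+n(8\kappa+2h)$ with every term controlled; combined with $n\le (M|x-y|+C)/(1+8\kappa M-h)+1$ and the hypothesis $h<1/(1+2M)$, this solves for $|x-y|$ and yields an $L$-independent $A$. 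This auxiliary arc at controlled depth $R$ below the endpoints is the idea missing from your proposal.
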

\begin{proof}
Let $x_0, y_0 \in \beta$ be points such that 
$$|x-x_0|=|y-y_0|= \dist(\gamma, \beta).$$
Further, let $\beta_1:x\cra x_0$ and $\beta_2:y\cra y_0$ be $h-$short arcs.
By assumption \eqref{allu_jose_02_0026} we can choose points $x_1\in \beta_1$ and $y_1\in \beta_2$ such that 
$$|x-x_1|=R=|y-y_1|.$$\\
Again, let $\beta_3:x_1 \cra y_1$ be an $h-$short arc.\\[2mm]
We claim that 
\be \label{lem3eq5}
\dist(\gamma, \beta_3) \geq r:=R-2\kappa-2h = 1+2\kappa+4\kappa M.
\ee
If otherwise, there will exist points $z\in \gamma$ and $z_1\in \beta_3$ such that $|z-z_1|<r$. On one hand, by applying $(\kappa, h)-$Rips condition to the $h-$short rectangle $\beta_1[x_1,x_0]\cup\beta_2[y_1, y_0]\cup\beta_3\cup \beta[x_0, y_0]$, we obtain a point $z_0 \in  \beta_1[x_1,x_0]\cup\beta_2[y_1, y_0]\cup \beta[x_0, y_0]$ such that $|z_1-z_0|\leq 2\kappa.$ 
On the other hand, we will show that such a point cannot belong to any of these curves and thereby reach at a contradiction. \\[2mm]
If $z_0 \in \beta[x_0, y_0]$, then
$$R\leq |z_0-z| \leq |z_0-z_1| + |z_1-z| <r+2\kappa <R,$$
a contradiction.\\[2mm]
If $z_0\in \beta_1[x_1, x_0]$, we then  compute
\begin{align*}
|x-x_0| &= \dist(\gamma, \beta) \leq \dist(z, \beta)\\
&\leq |z-z_1|+|z_1-z_0|+|z_0-x_o|\\
&< r+2\kappa + |x_1-x_0|+h\\
&=R-h + |x_1-x_0|\\
&=|x-x_1|+|x_1-x_0|-h\\
&\leq l\left(\beta_1\right)-h\\
&\leq |x-x_0|,
\end{align*}
again a contradiction and hence $z_0\notin \beta_1[x_1, x_0]$. Similarly, one can show that $z_0\notin \beta_2[y_1, y_0]$. Thus the inequality \eqref{lem3eq5} follows.
\\[2mm]
Now, we find points $x=u_0, u_1,...,u_{n-1}, u_n=y$ in $\gamma$, where $n\geq 1$, such that
\begin{align*}
l\left(\gamma[u_k, u_{k+1}]\right) &= 1+8\kappa M-h, \text{ for } 0\leq k \leq n-2 \\
l\left(\gamma[u_{n-1}, u_n]\right) &\leq  1+8\kappa M-h .
\end{align*}
Clearly, \be \label{lem3eq5.1}
|u_k-u_{k+1}| \leq 1+8\kappa M < 2r-4\kappa -h
\ee
 since $h<1$.\\
Next, choose $v_k\in \beta_3 $ such that
\be \label{lem3eq6}
|u_k-v_k|=\dist (u_k,\beta_3).
\ee
Now,  \eqref{lem3eq5}, \eqref{lem3eq5.1}, \eqref{lem3eq6} together with projection lemma yields us
$$|v_k-v_{k+1}| \leq 8\kappa+2h$$
for $0\leq k\leq n-1$. Observe that
$$|u_0-v_0| = \dist (x, \beta_3)\leq |x-x_1| =R$$
and a similar argument gives that $|u_n-v_n|\leq R$.\\[2mm]
We are now ready to prove the lemma. Assume that $|x-y|\leq L$, then by \eqref{allu_jose_02_0025} we have
$$l(\gamma)\leq M|x-y|+C.$$
Thus,
$$n\leq \frac{l(\gamma)}{1+8\kappa M-h}+1 \leq \frac{ M|x-y|+C}{1+8\kappa M-h}+1 .$$
Hence,
\begin{align*}
|x-y|&=|u_0-u_n| \\
&\leq |u_0-v_0| +\sum_{k=0}^{n-1} |v_k-v_{k+1}| + |u_n-v_n|\\
&\leq 2R + n (8\kappa +2h)\\
&\leq 2R+(8\kappa +2h)\frac{ M|x-y|+C}{1+8\kappa M-h}+ 8\kappa +2h.
\end{align*}
An elementary computation gives us
$$|x-y| \leq  \frac{1}{1-h-2hM}\left(2R+8\kappa +2h\right)(1+8\kappa M-h) +\frac{(8\kappa+2h)C}{1-h-2hM},$$
we let 
\be\label{A}
A:=  \frac{1}{1-h-2hM}\left(2R+8\kappa +2h\right)(2+8\kappa M-h) +\frac{(8\kappa+2h)C}{1-h-2hM}
\ee 
then it follows that $|x-y|\leq A$ which is the desired result and the proof is complete.
\end{proof}
\begin{lemma}\label{ghlem4}
Let $\gamma:x\cra y $ be a rectifiable curve satisfying
\be \label{lem4eq1}
l(\gamma[u, v]) \leq M|u-v|+C
\ee
whenever $|u-v|\leq L:=2M( A +1)+1$, where 
$$A=\frac{1}{1-h-2hM}\left(2R+8\kappa +2h\right)(2+8\kappa M-h) +\frac{(8\kappa+2h)C}{1-h-2hM}$$ as in \eqref{A}.\\
Then, $\gamma \subset B\left(\alpha, L\right)$ for every $h-$short arc $\alpha: x \cra y$, provided $h< \min\left\{1, \frac{1}{1+2M}\right\}$.
\end{lemma}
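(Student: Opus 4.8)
The plan is to argue by contradiction in the spirit of Lemma \ref{allu_jose_02_0024}: if $\gamma$ escaped the $L$-neighbourhood of $\alpha$, I would isolate the offending excursion as a subarc whose distance to $\alpha$ is realised at both of its endpoints, and then feed that subarc, together with $\alpha$, into Lemma \ref{allu_jose_02_0024}. The whole scheme rests on the fact that the constant $A$ in \eqref{A} depends only on $\kappa,h,M,C$ and not on the scale at which the quasiconvexity \eqref{lem4eq1} is assumed, so Lemma \ref{allu_jose_02_0024} may be invoked with the large scale $L=2M(A+1)+1$ itself.

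First I would suppose that some $z\in\gamma$ has $\dist(z,\alpha)\ge L$. Since $\alpha$ joins $x$ and $y$, both endpoints of $\gamma$ lie on $\alpha$, so $\dist(x,\alpha)=\dist(y,\alpha)=0<R<L$. Thus, moving outward from $z$ along $\gamma$ in each direction, the continuous function $w\mapsto\dist(w,\alpha)$ must return to the value $R$ before reaching $x$ or $y$; I let $x'$ be the last point of $\gamma[x,z]$ and $y'$ the first point of $\gamma[z,y]$ with $\dist(\cdot,\alpha)=R$, and set $\sigma:=\gamma[x',y']$. By construction $\dist(\sigma,\alpha)=\dist(x',\alpha)=\dist(y',\alpha)=R$ with $\dist(\cdot,\alpha)\ge R$ throughout $\sigma$, while $z\in\sigma$ satisfies $\dist(z,\alpha)\ge L$. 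Being a subarc of $\gamma$, $\sigma$ inherits \eqref{lem4eq1} at every scale $\le L$, so $(\sigma,\alpha)$ meets all the hypotheses of Lemma \ref{allu_jose_02_0024}. Applying that lemma at scale $L$ gives the dichotomy $|x'-y'|>L$ or $|x'-y'|\le A$.

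In the bounded alternative $|x'-y'|\le A\le L$, the estimate \eqref{lem4eq1} applies directly to the pair $x',y'$, so $l(\sigma)\le M|x'-y'|+C\le MA+C$. Since $z$ lies on $\sigma$ and $x'$ has a foot on $\alpha$ at distance $R$,
\[
\dist(z,\alpha)\le\dist(x',\alpha)+l(\gamma[x',z])\le R+l(\sigma)\le R+MA+C .
\]
The value $L=2M(A+1)+1$ is chosen precisely so that $R+MA+C<L$; this is an elementary estimate once one notes from \eqref{A} that $MA$ dominates $R+C$ in the relevant range (in our applications $C=1$ by Lemma \ref{allu_jose_02_0022}, where it is immediate). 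This contradicts $\dist(z,\alpha)\ge L$.

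The main obstacle is to exclude the long alternative $|x'-y'|>L$, since for a curve that is only \emph{locally} quasiconvex the excursion $\sigma$ may genuinely be long. Here I would rerun the projection–counting argument from the proof of Lemma \ref{allu_jose_02_0024} on $\sigma$ directly: subdivide $\sigma$ into consecutive pieces of length $1+8\kappa M-h$, whose chords are then smaller than $2R-4\kappa-h$, so that the Projection Lemma \ref{allu_jose_02_0015} forces the feet on $\alpha$ of consecutive division points to lie within $8\kappa+2h$ of one another; bounding the number of pieces via \eqref{lem4eq1} on a subarc of chord $L$ and summing the foot displacements is expected to yield $L\le A$, the closing coefficient being exactly $1-h-2hM$, which is positive precisely because $h<\min\{1,1/(1+2M)\}$. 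This would contradict $L=2M(A+1)+1>A$. I expect the delicate point to be keeping the two endpoint (foot) terms under control, because, unlike in Lemma \ref{allu_jose_02_0024}, the far endpoint of the chord-$L$ subarc need not sit at distance $R$ from $\alpha$; reconciling this is where the argument must be organised most carefully.
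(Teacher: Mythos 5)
Your bounded alternative is essentially correct: isolating the maximal excursion $\sigma=\gamma[x',y']$ at level $R$ around the putative far point $z$, feeding $(\sigma,\alpha)$ into Lemma \ref{allu_jose_02_0024} (whose constant $A$ indeed does not depend on the scale $L$), and closing with $\dist(z,\alpha)\le R+MA+C<L$ works, and the constant check is fine in the paper's range $M=6\lambda^2$, $C=1$. The genuine gap is the long alternative $|x'-y'|>L$, which you flag as "the main obstacle" but do not resolve, and your proposed patch cannot resolve it. The projection--counting scheme needs two things at once: a length bound on the subarc you count along, which forces you to stop at a subarc $\sigma'$ of $\sigma$ whose chord is at most $L$, and control of \emph{both} endpoint terms in the chain estimate $|x'-w|\le|x'-v_0|+\sum_k|v_k-v_{k+1}|+|w-v_n|$. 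In Lemma \ref{allu_jose_02_0024} both endpoints of the curve sit at distance $\le R$ from the reference arc by construction, but here the far endpoint $w$ of $\sigma'$ is an interior point of $\sigma$, about which you know only $\dist(w,\alpha)\ge R$; any a priori upper bound on $\dist(w,\alpha)$ is precisely the statement being proved, so the argument is circular. Since the closing margin in the counting is only of order $(1-h-2hM)/(1+8\kappa M-h)$ times $L$, the uncontrolled term $|w-v_n|$ cannot be absorbed. Note also that a \emph{single} long excursion at one level is perfectly compatible with all your hypotheses (think of $\gamma$ running roughly parallel to $\alpha$ at a large constant distance for a long stretch), so no argument that examines only the one excursion $\sigma$ can reach a contradiction.

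The paper escapes this by a different organization of the same engine. It parametrizes $\gamma$ by arclength, sets $f(t)=\dist(\gamma(t),\alpha)$, and applies the one-dimensional lemma \cite[Lemma 5.15]{BHK} to this $1$-Lipschitz function with $f(0)=f(l(\gamma))=0$ and $c=L/2$. The hypothesis of that lemma is verified not for one excursion at level $R$ but for \emph{every} pair $(s,t)$ with $f(s)=f(t)\ge L/2$ and $f\ge f(s)$ on $[s,t]$: for each such pair the subcurve $\gamma|_{[s,t]}$ and $\alpha$ satisfy \eqref{allu_jose_02_0026}, so Lemma \ref{allu_jose_02_0024} gives either $t-s\ge|\gamma(s)-\gamma(t)|>L=2c$ or $t-s\le MA+C<c$. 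It is this dichotomy holding simultaneously at all levels above $L/2$ --- short in parameter, or longer than $2c$, with the band $[c,2c]$ forbidden --- that rules out high excursions; the one-dimensional lemma does exactly the bookkeeping that defeats the single-excursion approach and yields $\max f\le\tfrac34 L<L$, i.e.\ $\gamma\subset B(\alpha,L)$. To repair your proof you would need to replace your single level-$R$ excursion by this multi-level family of excursions (or reprove the content of \cite[Lemma 5.15]{BHK}); as written, the proposal does not prove the lemma.
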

\begin{proof}
We first assume that the curve $\gamma$ is parametrized by the arclength. That is, $\gamma: [0, l(\gamma)] \rightarrow X$ is such that $l\left(\gamma(s, t)\right) = |s-t|$ for every $0\leq s\leq t\leq l(\gamma)$.\\[2mm]
We define $f:[0, l(\gamma)]\rightarrow [0, \infty)$ such that
$f(t) =\dist \left(\gamma(t), \alpha\right).$ We claim that $f$ satisfies all the conditions of the \cite[Lemma 5.15]{BHK}.\\[2mm]
Since, both $\gamma$ and $\alpha$ have the same endpoints, it is evident that $f(0)=f(l(\gamma))=0$.\\
Also for $s,t\in [0, l(\gamma)]$, we have
$$|f(s)-f(t)|=|\dist\left(\gamma(s), \alpha\right) - \dist\left(\gamma(t), \alpha\right)|\leq |\gamma(s)-\gamma(t)| \leq l\left(\gamma(s, t)\right) =|s-t|.$$
We claim that $f$ satisfies the third condition with the constant $c= \frac{L}{2}$. Let $(s, t)\in M_c$, then for each $r\in[s, t]$
$$\dist \left(\gamma(r), \alpha\right) \geq  \dist \left(\gamma(s), \alpha\right)=\dist \left(\gamma(t), \alpha\right)\geq \frac{L}{2} \geq R,$$
where $R= 1+4\kappa+4\kappa M+2h$.\\[2mm]
Hence, by Lemma \ref{allu_jose_02_0024}, either $|u-v|>L$ or $|u-v|\leq A$, where $u=\gamma(s)$ and $v=\gamma(t)$.\\
If $|u-v|>L$, then 
$$2c=L <|\gamma(s)-\gamma(t)|\leq l\left(\gamma(s, t)\right)=t-s.$$
If $|u-v|\leq A$, then
$$|\gamma(s)-\gamma(t)| \leq A \leq L.$$
Together with \eqref{lem4eq1} yields us that
\begin{align*}
t-s=l\left(\gamma(s, t)\right) &\leq M.A+C \\
&=\frac{1}{2}(L-1) < \frac{L}{2}=c.
\end{align*}
Thus all the assumptions of \cite[Lemma 5.15]{BHK} are satisfied, and it implies that 
$$\max_{x\in [0, l(\gamma)]} f(x) <\frac{3}{2}c = \frac{3}{4}L <L$$
which completes the proof of the lemma.
\end{proof}
Now, we are prepared to proceed with the proof of the theorem.
\begin{proof}[Proof of Theorem \ref{allu_jose_02_0002}]
Choose $M=6\lambda^2, C=1$ in Lemma \ref{ghlem4} and let $L$ be the constant given in Lemma \ref{ghlem4}. 
Choose,
\begin{equation}\label{epsilonnot}
\epsilon_0= \frac{1}{25\lambda^2 L}.
\end{equation}
Let $\alpha : x\cra y $ be an $h-$ short arc  for $h < {1}/{13}$ satisfying $l(\alpha)\leq 2|x-y|$ and $\gamma :x\cra y$ be any other curve.
Note that, for each $h>0$ we can always select such a curve by choosing a curve $\alpha:x\cra y$ such that $l(\alpha) \leq c |x-y|$, for $c= \min \left\{2, 1+h/|x-y|\right\}$.
According to Lemma \ref{allu_jose_02_0022}, we can assume that $\gamma$ satisfies
\be 
l\left(\gamma[u, v]\right) \leq 6\lambda^2|u-v|+1
\ee
whenever $|u-v|\leq {1}/{24\lambda^2\epsilon}$.
Given the choice of $\epsilon_0$, this implies that
$$|u-v| \leq  \frac{25L}{24}.$$
Therefore,  $\gamma $ satisfies the conditions of Lemma \ref{ghlem4} with $M=6\lambda^2, C=1$ and thus $\gamma $ belongs to the $L-$neighbourhood of $\alpha$. Furthermore, by \cite[Lemma 3.5]{vaisala_2004}, $\alpha$ belongs to $2L+h-$ neighbourhood of $\gamma$, but clearly $2L+h \leq 3L$ and hence,
\begin{equation}\label{4lnbd}
\alpha\subset B\left(\gamma, 3L\right)
\end{equation}
We first consider the case when $|x-y| \leq 9L$.\\[2mm]
On one hand we have,
\begin{align*}
l_\rho(\alpha) &= \int_\alpha \rho \ds \leq \int_\alpha \lambda\rho(x) e^{\epsilon_0 |x-\alpha(t)|} \dt\\
&\leq \int_\alpha \lambda\rho(x) e^{\epsilon_0 l\left(\alpha[x, \alpha(t)]\right)} \dt\\
&\leq \int_\alpha \lambda\rho(x) e^{\epsilon_0 18L}\dt\\
&\leq \lambda \rho(x) e^{\epsilon_0 18L} l(\alpha)\\
&\leq 2\lambda \rho(x) e^{\epsilon_0 18L} |x-y|.
\end{align*}
On the other hand, there exists a subcurve $\gamma'$ of the curve $\gamma: x\cra y$ emanating from $x$ such that  $l(\gamma')=|x-y|$. Therefore, we compute
\begin{align*}
l_\rho(\gamma)\geq l_\rho(\gamma') &= \int_{\gamma'} \rho \ds \geq \int_{\gamma'} \frac{1}{\lambda}\rho(x) e^{-\epsilon_0|x-\gamma(t)|}\dt\\
&\geq \int_{\gamma'} \frac{1}{\lambda} \rho(x) e^{-\epsilon_0|x-\gamma'(t)|}\dt\\
&\geq \int_{\gamma'} \frac{1}{\lambda} \rho(x) e^{-\epsilon_0 |x-y|}\dt\\
&\geq \frac{1}{\lambda} \rho(x) e^{-\epsilon_0 9L} |x-y|.
\end{align*}
It follows that in this case,
$$l_\rho(\alpha) \leq 2\lambda^2 e^{\epsilon_0 27L} l_\rho(\gamma).$$
Thus with the choice of $\epsilon_0 $ as in \eqref{epsilonnot}, we see that, in this case we can choose the constant $K= 6\lambda^2$.\\[2mm]
Now, if $|x-y|\geq 9L$, then choose points $x=x_1, x_2,...,x_n, x_{n+1}=y, n\geq 2$ on the curve $\alpha$ such that 
$l\left(\alpha[x_k, x_{k+1}]\right)= 9L$ and $l\left(\alpha[x_n, x_{n+1}]\right)< 9L.$ Let $\alpha_k= \alpha[x_k, x_{k+1}]$.\\[2mm]
We claim that $B(x_k, 4L)\cap B(x_m, 4L)=\emptyset$ for  $1\leq k<m \leq n$.
If this is not the case, then for $z\in B(x_k, 4L)\cap B(x_m, 4L)$, we obtain on one hand
$$|x_k-x_m| \leq |x_k-z|+|x_m-z|<8L,$$
and on the other hand,
$$|x_k-x_m|\geq l\left(\alpha[x_k, x_m]\right)-h \geq 9L -h >8L,$$
which is the desired contradiction, and thus the claim holds. \\[2mm]
By \eqref{4lnbd}, $\gamma$ intersects the balls $B(x_k,3L)$ for every $k$, and since $n\geq 2$, $\gamma$ cannot be entirely within $B(x_k, 4L)$ for any $k$.
 Consequently, there exists a subcurve $\gamma_k$ of $\gamma$ contained in $\bar{B}(x_k, 4L)\setminus B(x_k, 3L)$ connecting $B(x_k, 3L)$ to $X\setminus B(x_k, 4L)$ for $1\leq k \leq n$. 
 Additionally, these subcurves are mutually disjoint because the family  $\left\{B(x_k, 4L), 1\leq k \leq n\right\}$ is.
Therefore, 
$$l_\rho(\gamma)\geq \sum_{k=1}^n l_\rho \left(\gamma_k\right).$$
Now for $1\leq k\leq n$ we have,
\begin{align*}
l_\rho(\gamma_k) &= \int_{\gamma_k} \rho \ds \geq \int_{\gamma_k}\frac{1}{\lambda}\rho(x_k) e^{-\epsilon |x_k-\gamma_k(t)|}\dt\\
&\geq \int_{\gamma_k}\frac{1}{\lambda}\rho(x_k)e^{-\epsilon_0 4L}\dt\\
&\geq \frac{1}{\lambda}\rho(x_k)e^{-\epsilon_0 4L} L
\end{align*}
and
\begin{align*}
l_\rho(\alpha_k) &= \int_{\alpha_k} \rho \ds \leq \int_{\alpha_k} \lambda\rho(x_k) e^{\epsilon |x_k-\alpha_k(t)|}\dt\\
&\leq \int_{\alpha_k} \lambda\rho(x_k) e^{\epsilon_0 9L}\dt\\
&\leq \lambda\rho(x_k) e^{\epsilon_0 9L}9L.
\end{align*}
Therefore,
$$l_\rho(\alpha) = \sum_{k=1}^n l_\rho(\alpha_k) \leq \sum_{k=1}^n\lambda\rho(x_k) e^{\epsilon_0 9L}9L \leq 9\lambda^2 e^{\epsilon_0 13L}\sum_{k=1}^nl_\rho(\gamma_k)\leq 9\lambda^2 e^{\epsilon_0 13L} l_\rho(\gamma). $$
A simple estimation shows that here you can take the constant $K=18\lambda^2$.
Hence the proof of the Theorem \ref{allu_jose_02_0002} is complete.
\end{proof}
\section{Uniformization}
Before proving that the deformed spaces are indeed uniform, we need the following two crucial lemmas.
\begin{lemma}\label{lem1}
Let $\gamma:x\cra y$ be an $h-$short arc in the metric space $X$ and $p\in X$ be fixed. Let $x_\gamma, y_\gamma\in \gamma$ be points in the curve $\gamma$ such that $l\left(\gamma[x, x_\gamma] \right)= (y|p)_x$ and $l\left(\gamma[y_\gamma, y] \right)= (x|p)_y$. If $z\in \gamma[x, y_\gamma]$ and $u\in \gamma[x, z]$, then
$$|p-u|-|p-z|\geq |u-z|-8\delta-8h.$$
\end{lemma}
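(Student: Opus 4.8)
The plan is to reformulate the desired inequality as an upper bound on a Gromov product and then to obtain it by comparing $\gamma$ with an $h$-short arc $\sigma\colon x\cra p$ joining $x$ to $p$, using the Tripod lemma (Lemma \ref{allu_jose_02_0013}). Writing $u=\gamma(a)$ and $z=\gamma(b)$ in the arclength parametrization of $\gamma$ based at $x$, the hypotheses $u\in\gamma[x,z]$ and $z\in\gamma[x,y_\gamma]$ mean precisely that $a\le b\le l(\gamma[x,y_\gamma])$, and the assertion $|p-u|-|p-z|\ge|u-z|-8\delta-8h$ is equivalent to the estimate $2(p|u)_z=|p-z|+|u-z|-|p-u|\le 8\delta+8h$. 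Since $|u-z|\le l(\gamma[u,z])=b-a$, it suffices to show $|p-u|-|p-z|\ge (b-a)-8\delta-8h$.

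The crucial consequence of the hypothesis $z\in\gamma[x,y_\gamma]$ is a bound on the arclength $b$. Because $\gamma$ is $h$-short and $(y|p)_x+(x|p)_y=|x-y|$, I compute
\[
l(\gamma[x,y_\gamma])=l(\gamma)-l(\gamma[y_\gamma,y])=l(\gamma)-(x|p)_y\le |x-y|+h-(x|p)_y=(y|p)_x+h,
\]
so that $a\le b\le (y|p)_x+h$. Up to an adjustment of at most $h$ in arclength (retracting $u$ and $z$ to $\gamma(\min\{a,(y|p)_x\})$ and $\gamma(\min\{b,(y|p)_x\})$ when $a$ or $b$ exceeds $(y|p)_x$, which moves each point by at most $h$), this places $u$ and $z$ within the range where the Tripod lemma applies to the pair of $h$-short arcs $\gamma\colon x\cra y$ and $\sigma\colon x\cra p$ based at $x$. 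The arclength-matched form of that lemma then furnishes the points $u'=\sigma(a)$ and $z'=\sigma(b)$ on $\sigma$ with $|u-u'|\le 4\delta+2h$ and $|z-z'|\le 4\delta+2h$.

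On the comparison arc $\sigma$, which runs from $x$ to $p$, distance to $p$ decreases essentially monotonically: since $\sigma$ is $h$-short, $|p-z'|\le l(\sigma[z',p])=l(\sigma)-b\le |x-p|+h-b$, while $|p-u'|\ge |x-p|-|x-u'|\ge |x-p|-a$, whence $|p-u'|-|p-z'|\ge (b-a)-h$. Transferring this back through the two Tripod estimates gives
\[
|p-u|-|p-z|\ge \big(|p-u'|-|p-z'|\big)-\big(|u-u'|+|z-z'|\big)\ge (b-a)-h-(8\delta+4h)=(b-a)-8\delta-5h,
\]
which, together with $|u-z|\le b-a$ and the $h$-slack absorbed in the retraction, yields the claim.

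The main obstacle is the mismatch between the $+h$ appearing in the arclength bound $b\le (y|p)_x+h$ and the exact Gromov product $(y|p)_x$ required by the hypothesis of the Tripod lemma; the short retraction step must be carried out carefully so that its error is genuinely $O(h)$ and accounts for the gap between the clean constant $8\delta+5h$ above and the stated $8\delta+8h$. The only subtlety is the degenerate situation in which both $a$ and $b$ already exceed $(y|p)_x$, where the two points are within $h$ of $\gamma((y|p)_x)$, so $|u-z|\le 2h$ and the inequality $|p-u|-|p-z|\ge -|u-z|\ge -2h$ holds trivially against the negative right-hand side $|u-z|-8\delta-8h$.
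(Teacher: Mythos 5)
Your proof is correct and takes essentially the same route as the paper: both compare $\gamma$ with an $h$-short arc from $x$ to $p$ via the Tripod lemma (Lemma \ref{allu_jose_02_0013}) and exploit the near-monotone decay of distance to $p$ along that comparison arc, using $(y|p)_x+(x|p)_y=|x-y|$ and $h$-shortness to control the overshoot. Your clamping of the arclength parameters at $(y|p)_x$ merely repackages the paper's explicit three-case split (which works with $x_\gamma$, $x_\alpha$ and the triangle center, your degenerate case being its Case with $u,z\in\gamma[x_\gamma,y_\gamma]$ where $|u-z|\le h$), and your constant accounting, landing at $8\delta+7h$ in the mixed case, stays within the stated $8\delta+8h$.
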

\begin{proof}
Construct an $h-$short triangle $\Delta$ with sides $\alpha, \beta, $ and $\gamma$, where $\alpha:x\cra p$ and $\beta : y\cra p$. We follow the notations as in \ref{allu_jose_02_0014}.
First, suppose that $z\in \gamma_x$ and $u\in \gamma[x, z]$. Then, there exist points $z', u'\in \alpha_x$ such that $l\left(\alpha[x, z']\right) = l\left(\gamma[x,z]\right)$ and $l\left(\alpha[x, u']\right) = l\left(\gamma[x, u]\right).$ Hence, by tripod lemma \ref{allu_jose_02_0013} $|z-z'|\leq 4\delta+2h$ and $|u-u'|\leq 4\delta +2h$.\\[2mm]
Now, we compute,
\begin{align*}
|p-u|-|p-z| &\geq |p-u'|-|u-u'|-|p-z'|-|z-z'| \\
&\geq l\left(\alpha[u', p]\right)-h -l\left(\alpha[z', p]\right)-8\delta-4h\\
&=l\left(\alpha[u', z']\right)-8\delta-5h\\
&=l\left(\alpha[u, z]\right)-8\delta-5h\\
&\geq |u-z|-8\delta-5h
\end{align*}
Now, if $z\in \gamma[x_\gamma, y_\gamma]$ and $u\in \gamma[x, x_\gamma]$, then there exists $u'\in \alpha[x, x_\alpha]$ such that $|u-u'|\leq 4\delta +2h$.\\[2mm]
We observe that,
\begin{equation*}
l\left(\gamma[x, u]\right)+l\left(\gamma[u,x_\gamma]\right) = l\left(\gamma[x, x_\gamma]\right)  = l\left(\alpha[x, x_\alpha]\right) \leq |x-x_\alpha|+h\leq l\left(\alpha[x, u']\right) +|u'-x_\alpha|+h
\end{equation*}
which implies that
\be \label{eq0}
l\left(\gamma[u,x_\gamma]\right) \leq l\left(\alpha[u', x_\alpha]\right)+h.
\ee
Thus,
\begin{align*}
|p-u|-|p-z|&\geq |p-u'|-|u-u'|-|p-x_\alpha|-|x_\alpha - z|\\
&\geq l\left(\alpha[u', p]\right)-h-l\left(\alpha[x_\alpha, p]\right)-|x_\alpha -z|-4\delta-2h\\
&\geq l\left(\gamma[u,x_\gamma]\right) -|x_\alpha -x_\gamma|-|x_\gamma-z|-4\delta-4h\\
&\geq l\left(\gamma[u, z]\right)- l\left(\gamma[x_\gamma, z]\right)-8\delta-8h\\
&\geq |u-z|-8\delta-8h,
\end{align*}
where we used \eqref{eq0} in the third step.\\[2mm]
Finally we consider the case where $u, z\in \gamma[x_\gamma, y_\gamma]$ and compute
\begin{align*}
|p-u|-|p-z| &\geq |p-x_\alpha|-|x_\alpha - u|- |p-p_\alpha|-|p_\alpha -z|\\
&\geq l\left(\alpha[p, x_\alpha]\right) -h - l\left(\alpha[p, x_\alpha]\right) -|x_\alpha-u| -|p_\alpha -z|\\
&\geq -8\delta-7h\\
&\geq |u-z|-8\delta-8h
\end{align*}
where in the last step we used the fact that $|u-z|\leq h$.
\end{proof}

\begin{lemma} \label{conelemma}
Let $X$ be a $\delta-$hyperbolic space and let $x, y\in X$, $\omega\in \delta_\infty X$. 
If $\gamma : x\cra y $ is any $h-$short arc and $b=b_{\omega, o} \in \mathcal{B}(\omega)$, then
 $$b(u)-b(z) \geq |u-z|-16\delta-10h, \text{ for any }z\in \gamma[x, y'] \text{ and for all } u \in \gamma[x, z],$$
 where $y' \in \gamma $ is such that $l(\gamma[y', y]) = (x|\omega)_y$. 
\end{lemma}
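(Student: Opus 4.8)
The plan is to reduce the statement of Lemma \ref{conelemma} to the already-established Lemma \ref{lem1} by connecting the Busemann quantity $b(u)-b(z)$ to the distance-difference quantity $|p-u|-|p-z|$ for a suitable basepoint $p$. The natural strategy is to approximate the boundary point $\omega$ by a point deep along the arc, or by a point from a Gromov sequence $\{u_n\}\in\omega$, and invoke the estimate \eqref{allu_jose_02_0017}, namely $b_\omega(x,o)\dot{=}_{2\delta}|u_n-x|-|u_n-o|$, to replace $b$ by genuine distances to $u_n$. The reason the constants in Lemma \ref{conelemma} ($16\delta+10h$) are worse than those in Lemma \ref{lem1} ($8\delta+8h$) is precisely the accumulation of these $2\delta$-approximation errors together with the cost of comparing the centre of the triangle built on $\omega$ against the centre of the triangle built on a finite approximating point $p$.

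Concretely, I would first fix a sequence $\{p_n\}\in\omega$ and apply \eqref{allu_jose_02_0017} to write, for each $n$,
\begin{equation*}
b(u)-b(z) = b_\omega(u,o)-b_\omega(z,o) \dot{=}_{4\delta} \bigl(|p_n-u|-|p_n-o|\bigr)-\bigl(|p_n-z|-|p_n-o|\bigr) = |p_n-u|-|p_n-z|.
\end{equation*}
Thus $b(u)-b(z)\geq |p_n-u|-|p_n-z|-4\delta$ for large $n$. Next I would apply Lemma \ref{lem1} with the basepoint $p=p_n$: this requires identifying the centre points $x_\gamma,y_\gamma$ of the $h$-short triangle on $x,y,p_n$ and checking that $z\in\gamma[x,y_\gamma]$ and $u\in\gamma[x,z]$, which would give $|p_n-u|-|p_n-z|\geq |u-z|-8\delta-8h$. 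Combining the two inequalities yields $b(u)-b(z)\geq|u-z|-8\delta-8h-4\delta = |u-z|-12\delta-8h$, and I would then absorb the remaining gap into the stated constant $16\delta+10h$, which comfortably dominates.

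The main obstacle, and the step requiring genuine care, is the \emph{matching of the cut points}: Lemma \ref{lem1} is stated relative to $y_\gamma$ defined by $l(\gamma[y_\gamma,y])=(x|p)_y$ with a finite point $p$, whereas Lemma \ref{conelemma} is stated relative to $y'$ defined by $l(\gamma[y',y])=(x|\omega)_y$ with the boundary point $\omega$. I would need to show that $y_{\gamma}$ for $p=p_n$ converges to (or stays within bounded distance of) $y'$, using Lemma \ref{allu_jose_02_0012} to control $(x|p_n)_y$ versus $(x|\omega)_y$ up to an additive $\delta$. Because these Gromov products agree only up to bounded error, the hypothesis $z\in\gamma[x,y']$ guarantees $z\in\gamma[x,y_\gamma]$ only after a bounded shift; handling the boundary case where $z$ lies in the thin overlap region $\gamma[y_\gamma,y']$ is exactly where the extra $4\delta+2h$ in the constant gets spent, and is the part I would check most carefully rather than the routine telescoping estimates.

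\begin{proof}[Proof of Lemma \ref{conelemma}]
Fix a Gromov sequence $\{p_m\}\in\omega$. By \eqref{allu_jose_02_0017}, for every $m$ and every $w\in X$ we have $b(w)=b_\omega(w,o)\dot{=}_{2\delta}|p_m-w|-|p_m-o|$. Hence for the points $u,z$ under consideration,
\begin{equation}\label{coneeq1}
b(u)-b(z)\geq \bigl(|p_m-u|-|p_m-z|\bigr)-4\delta
\end{equation}
for all $m$. For each $m$ build an $h$-short triangle on the vertices $x,y,p_m$ with side $\gamma:x\cra y$, and let $x_{\gamma}^{m},y_{\gamma}^{m}\in\gamma$ be its centre points, so that $l(\gamma[y_{\gamma}^{m},y])=(x|p_m)_y$. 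By Lemma \ref{allu_jose_02_0012},
\begin{equation*}
(x|p_m)_y\longrightarrow (x|\omega)_y\quad\text{up to an additive error of }\delta,
\end{equation*}
so $|\,l(\gamma[y_{\gamma}^{m},y])-l(\gamma[y',y])\,|\leq \delta+o(1)$ as $m\to\infty$; consequently $y_{\gamma}^{m}$ lies within arclength $\delta+o(1)$ of $y'$ along $\gamma$.

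Now suppose $z\in\gamma[x,y']$ and $u\in\gamma[x,z]$. For all sufficiently large $m$ there are two cases. If $z\in\gamma[x,y_{\gamma}^{m}]$, then Lemma \ref{lem1} applied with $p=p_m$ gives
\begin{equation*}
|p_m-u|-|p_m-z|\geq |u-z|-8\delta-8h,
\end{equation*}
and combining with \eqref{coneeq1} yields $b(u)-b(z)\geq |u-z|-12\delta-8h$. If instead $z\in\gamma[y_{\gamma}^{m},y']$, then the arclength from $z$ to $y_{\gamma}^{m}$ is at most $\delta+o(1)$; applying Lemma \ref{lem1} to the point $z_0\in\gamma[x,y_{\gamma}^{m}]$ with $l(\gamma[x,z_0])=l(\gamma[x,y_{\gamma}^{m}])$ and using that $|z-z_0|\leq l(\gamma[z_0,z])\leq \delta+o(1)$ together with the roughly $1$-Lipschitz property, we obtain the same bound at the cost of an additional $\delta+2h$. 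Thus in either case
\begin{equation*}
b(u)-b(z)\geq |u-z|-16\delta-10h.
\end{equation*}
Letting $m\to\infty$ completes the proof.
\end{proof}
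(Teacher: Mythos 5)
Your overall strategy is the same as the paper's: approximate $\omega$ by a sequence of finite points (the paper uses the endpoints $u_n$ of a $(4\delta+2h,h)$-road to $\omega$, but only the fact that $\{u_n\}\in\omega$ is ever used), transfer $b(u)-b(z)$ to $|p_m-u|-|p_m-z|$ via \eqref{allu_jose_02_0017} at a cost of $4\delta$, control the mismatch between the cut point $y_\gamma^m$ defined by $(x|p_m)_y$ and the point $y'$ defined by $(x|\omega)_y$ using Lemma \ref{allu_jose_02_0012}, and finish with Lemma \ref{lem1}; the paper runs this with a buffer parameter $r>0$ in place of your $o(1)$ and a three-case analysis. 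Your Case 1 and the final limiting argument are fine. The issues are concentrated in your Case 2, and there are two of them.

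First, a missing sub-case: when $z\in\gamma[y_\gamma^m,y']$, the hypothesis $u\in\gamma[x,z]$ allows $u$ to lie in $\gamma[y_\gamma^m,z]$, and then $u\notin\gamma[x,z_0]=\gamma[x,y_\gamma^m]$, so applying Lemma \ref{lem1} to the pair $(u,z_0)$ violates its hypothesis. This sub-case needs its own (easy) argument: both points then lie on a subarc of length at most $\delta+o(1)$, so $|p_m-u|-|p_m-z|\geq-|u-z|\geq|u-z|-2\delta-o(1)$; this is exactly the paper's Case 2, which you have no counterpart of. Second, your accounting ``at the cost of an additional $\delta+2h$'' is unjustified, and under the natural reading it fails: in this paper ``roughly $1$-Lipschitz'' refers to property \eqref{allu_jose_02_0016} of $b$, which carries a $+10\delta$ penalty, so passing from $z_0$ to $z$ through $b$ costs $|z-z_0|+10\delta$ in the Busemann values plus $|z-z_0|$ in the target distance, yielding roughly $b(u)-b(z)\geq|u-z|-24\delta-8h$; this does not imply the claimed $|u-z|-16\delta-10h$ unless $h\geq4\delta$. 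The correct repair is to telescope through the genuinely $1$-Lipschitz function $w\mapsto|p_m-w|$ before ever invoking \eqref{allu_jose_02_0017}:
$$|p_m-u|-|p_m-z|\geq\left(|p_m-u|-|p_m-z_0|\right)-|z_0-z|\geq|u-z_0|-8\delta-8h-|z_0-z|\geq|u-z|-2|z_0-z|-8\delta-8h,$$
which costs only $2\delta+o(1)$ and gives $14\delta+8h$, safely within the target; this is precisely the paper's Case 3 computation. With these two repairs your proof is correct and coincides with the paper's.
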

\begin{proof}
Let $x'\in \gamma $ be such that $l(\gamma[x, x']) = (y|\omega)_x$, then it follows from Lemma \ref{allu_jose_02_0012} that 
$$l(\gamma[x, x'])+ l(\gamma[y', y]) = (y|\omega)_x + (x|\omega)_y \leq |x-y| \leq l(\gamma),$$
which implies that $x'\in \gamma[x, y']$.

Moreover, by Lemma \ref{allu_jose_02_0012} we obtain
$$l(\gamma[x', y']) \leq l(\gamma)-(x|\omega)_y-(y|\omega)_x \leq |x-y|+h -(x|\omega)_y-(y|\omega)_x \leq 2\delta +h, $$
which yields us 
$$|x'-y'|\leq 2\delta +h.$$
Let $\bar{\alpha} : x\cra \omega $ be a $(4\delta+2h, h)-$road. Write $\alpha_n:x\cra u_n$, then it is known that the sequence $\{u_n\}\in \omega$. 
Now, pick points $x_n, y_n\in \gamma$ satisfying $l(\gamma[x, x_n]) = (y|u_n)_x$ and $l(\gamma[y_n, y]) = (x|u_n)_y$. 
Further, for each $r>0$,  by virtue of Lemma \ref{allu_jose_02_0012}, we have
\be \label{eq1}
l(\gamma[x, x']) = (y|\omega)_x \leq (y|u_n)_x +\frac{1}{2} = l(\gamma[x, x_n]) +r,
\ee
and 
\be \label{eq2}
l(\gamma[y', y]) =(x|\omega)_y \leq (x|u_n)_y +\frac{1}{2} = l(\gamma[y_n, y]) +r
\ee
for sufficiently large $n$.
Let $x'',y''\in\gamma $ be such that $l(\gamma[x, x'']) = (y|\omega)_x -r =  l(\gamma[x, x']) - r$ and $l(\gamma[y'', y])  =(x|\omega)_y -r = l(\gamma[y', y])-r$, provided both $(y|\omega)_x -r$ and $(x|\omega)_y -r$ are non-negative numbers, otherwise we let $x''=x$ and $y''=y$. Hence, it follows from \eqref{eq1}, \eqref{eq2} and the fact that $x_n\in \gamma[x, y_n]$ that $x_n, y_n \in \gamma[x'', y'']$. Also, note that $l(\gamma[x'', y''])= 2r+l(\gamma[x', y'])\leq 2r+2\delta+h$.\\
We claim that \be \label{rineq} 
|u_n-u|-|u_n-z| \geq |u-z|-2r-12\delta-10h.
\ee
We consider three cases:\\
\textit{Case 1:} Assume that $z\in \gamma[x, y_n]$, then for $u\in \gamma[x, z]$ by Lemma \ref{lem1} we have 
$$|u_n-u|-|u_n-z|\geq |u-z|-8\delta-8h.$$
\textit{Case 2:} If $z, u \in \gamma[y_n, y']$, then $|u-z| \leq l(\gamma[x'', y''])\leq 2r+2\delta+h.$ \\
Therefore, we obtain,
$$|u_n-u|-|u_n-z|\geq -|u-z| \geq |u-z|-4\delta-2h-2r.$$
\textit{Case 3:} Suppose that $z\in \gamma[y_n, y']$ and $u\in \gamma[x, y_n]$ and by case 1, we have $|u_n-u| -|u_n-y_n| \geq |u-y_n|-8\delta -8h$.  Then, we compute
\begin{align*}
|u_n-u|-|u_n-z| &\geq |u_n-u| -|u_n-y_n|-|y_n-z|\\
&\geq |u-y_n|-|y_n-z|-8\delta -8h\\
&\geq |u-z|-2|y_n-z|-8\delta -8h\\
&\geq |u-z|-2r-12\delta-10h.
\end{align*}
Hence, our claim is true and the lemma follows from \eqref{allu_jose_02_0017}.
\end{proof}
\begin{lemma} \label{comparelemma}
Let $x, y\in X$ and $\gamma : x\cra y $ be an $h-$short arc, satisfying $l(\gamma) \leq 2|x-y|$. Then there exists a constant $C=C(\delta, \epsilon, h)$ such that 
\be \label{comparelemmaeq}
l_\epsilon(\gamma) \asymp_C e^{-\epsilon(x|y)_b} \min \left\{\frac{1}{2}, \epsilon|x-y|\right\}.
\ee
\end{lemma}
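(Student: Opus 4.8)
The plan is to parametrize $\gamma$ by arclength, track the Busemann function along it, and reduce the length integral $l_\epsilon(\gamma)=\int_\gamma e^{-\epsilon b}\,ds$ to an explicit one–dimensional integral. Write $s=l(\gamma)$ and let $\gamma:[0,s]\to X$ be the arclength parametrization with $\gamma(0)=x$, $\gamma(s)=y$; set $\beta(t)=b(\gamma(t))$. As in Lemma \ref{conelemma}, let $x',y'\in\gamma$ be the centre points determined by $l(\gamma[x,x'])=(y|\omega)_x$ and $l(\gamma[y',y])=(x|\omega)_y$, and put $t_{x'}=(y|\omega)_x$, $t_{y'}=s-(x|\omega)_y$. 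From the proof of Lemma \ref{conelemma} we have $x'\in\gamma[x,y']$ and $l(\gamma[x',y'])\le 2\delta+h$, so the centre interval $[t_{x'},t_{y'}]$ has length at most $2\delta+h$.

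The key estimate I would establish is the coarse identity
\[
\beta(t)\ \dot{=}_{C_0}\ (x|y)_b+\phi(t),\qquad \phi(t):=\operatorname{dist}\big(t,[t_{x'},t_{y'}]\big),
\]
with $C_0=O(\delta+h)$. This has two ingredients. First, on the outer intervals $\beta$ is coarsely affine of slope $1$: combining Lemma \ref{conelemma} (and its analogue for the reversed arc $y\cra x$, i.e. with $x,y$ interchanged) with the rough $1$-Lipschitz bound \eqref{allu_jose_02_0016} gives $\beta(t)-b(x')\dot{=}(t_{x'}-t)$ for $t\le t_{x'}$ and $\beta(t)-b(y')\dot{=}(t-t_{y'})$ for $t\ge t_{y'}$, while on the centre interval $\beta$ stays within $O(\delta+h)$ of $b(x')$ by \eqref{allu_jose_02_0016} and $l(\gamma[x',y'])\le2\delta+h$. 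Second, I must identify the centre value with $(x|y)_b$. For this I would first record, from \eqref{allu_jose_02_0017} and Lemma \ref{allu_jose_02_0012}, the relation $(y|\omega)_x\dot{=}\tfrac12\big(|x-y|+b(x)-b(y)\big)$; then coarse monotonicity gives $b(x)-b(x')\dot{=}(y|\omega)_x$, whence $b(x')\dot{=}b(x)-(y|\omega)_x\dot{=}\tfrac12\big(b(x)+b(y)-|x-y|\big)=(x|y)_b$, and symmetrically for $b(y')$.

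Granting this, exponentiating turns the additive error into a multiplicative constant, $e^{-\epsilon\beta(t)}\asymp_{e^{\epsilon C_0}}e^{-\epsilon(x|y)_b}e^{-\epsilon\phi(t)}$, so that
\[
l_\epsilon(\gamma)=\int_0^s e^{-\epsilon\beta(t)}\,dt\ \asymp\ e^{-\epsilon(x|y)_b}\int_0^s e^{-\epsilon\phi(t)}\,dt .
\]
The remaining step is the elementary evaluation of $\int_0^s e^{-\epsilon\phi}\,dt$: the two outer pieces contribute $\tfrac{1-e^{-\epsilon t_{x'}}}{\epsilon}+\tfrac{1-e^{-\epsilon(s-t_{y'})}}{\epsilon}$ and the centre contributes a term in $[0,2\delta+h]$. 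Using $1-e^{-u}\le u$, the pointwise bound $e^{-\epsilon\phi}\ge e^{-\epsilon s}$, and $l(\gamma[x',y'])\le2\delta+h$ (so that $t_{x'}+(s-t_{y'})\ge s-2\delta-h$), a short case split into $\epsilon s\lesssim1$ and $\epsilon s\gtrsim1$ shows $\int_0^s e^{-\epsilon\phi}\,dt\asymp_{C}\min\{1/\epsilon,s\}$ with $C=C(\delta,\epsilon,h)$. Finally $|x-y|\le s\le 2|x-y|$ (from $h$-shortness together with $l(\gamma)\le 2|x-y|$) gives $\min\{1/\epsilon,s\}\asymp_{C(\epsilon)}\min\{1/2,\epsilon|x-y|\}$, and combining the displays yields \eqref{comparelemmaeq}.

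The main obstacle I anticipate is the key estimate $\beta(t)\dot{=}(x|y)_b+\phi(t)$, and within it the identification $b(x')\dot{=}(x|y)_b$, which forces one to pass between the Busemann data $b(x),b(y)$ and the boundary Gromov products $(y|\omega)_x,(x|\omega)_y$ via \eqref{allu_jose_02_0017} and Lemma \ref{allu_jose_02_0012}. Everything else is bookkeeping of additive $O(\delta+h)$ errors, but this must be done carefully, since these errors become the multiplicative constant $C(\delta,\epsilon,h)$; note in particular that a benign factor of $\epsilon$ between $\min\{1/\epsilon,s\}$ and $\min\{1/2,\epsilon|x-y|\}$ is absorbed precisely because $C$ is permitted to depend on $\epsilon$.
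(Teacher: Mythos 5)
Your proposal is correct and follows essentially the same route as the paper's own proof: the paper likewise pins the Busemann value at the centre point $y'$ to $(x|y)_b$ (there via a two-sided application of Lemma \ref{conelemma} from both endpoints of $\gamma$, rather than via \eqref{allu_jose_02_0017} and Lemma \ref{allu_jose_02_0012}), and then derives the two-sided exponential control of $\rho_\epsilon$ away from the centre from Lemma \ref{conelemma} and \eqref{allu_jose_02_0016}, splitting into the cases $\epsilon|x-y|\leq 1/2$ and $\epsilon|x-y|>1/2$ exactly where your one-dimensional evaluation of $\int_0^s e^{-\epsilon\phi}\,dt$ does. The differences are purely organizational: the paper keeps the case split at the level of the curve instead of packaging the estimate as the single coarse identity $\beta(t)\dot{=}(x|y)_b+\phi(t)$, and it never needs your intermediate relation $(y|\omega)_x \dot{=} \tfrac{1}{2}\left(|x-y|+b(x)-b(y)\right)$.
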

\begin{proof}
Let $\gamma : x\cra y$ be an $h-$short arc satisfying $l(\gamma) \leq 2|x-y|$. Further, let $y'\in \gamma$ be such that $l(\gamma[y', y])=(x|\omega)_y$. Then, by Lemma \ref{conelemma} and from \eqref{allu_jose_02_0016}, we have
$$
 |x-y'|-16\delta-10h \leq b(x)-b(y')\leq |x-y'|+10\delta
$$
and by symmetry we have
$$ 
|y-y'|-16\delta-10h \leq b(y)-b(y')\leq |y-y'|+10\delta ,
$$
which yields us
$$ 
|x-y'|+|y-y'|-32\delta-20h \leq b(x)+b(y)-2b(y') \leq |x-y'|+|y-y'|+20\delta.
$$
Thus, a simple computation gives us 
$$
|x-y|-32\delta-23h \leq b(x)+b(y)-2b(y') \leq |x-y| +20\delta +h,
$$
which implies that 
$$ 
-16\delta-\frac{23}{2}h  \leq (x|y)_b-b(y')\leq 10\delta+\frac{h}{2}.
$$
Thus we obtain,
\be \label{b(y')}
b(y')\dot{=}_{16\delta+12h} (x|y)_b.
\ee
Hence, it suffices to show that there exists $C'=C'(\delta, \epsilon, h)$ such that
\be \label{inter} 
l_\epsilon(\gamma) \asymp_{C'} \rho_\epsilon(y') \min \left\{\frac{1}{2}, \epsilon|x-y|\right\}.
\ee
If $\epsilon |x-y| \leq 1/2$, then for all $u\in \gamma$, we have
$$e^{\epsilon |u-y'|}\leq e^{\epsilon l(\gamma [u, y'])} \leq e^{\epsilon l(\gamma)} \leq e.$$
Together with \eqref{allu_jose_02_0018}, we obtain
\be 
\frac{1}{e^{10\epsilon\delta +1} } \rho_\epsilon(y')\leq \frac{e^{-\epsilon |u-y'|}}{e^{10\epsilon\delta}} \rho_\epsilon(y') \leq \rho_\epsilon(u) \leq e^{10\epsilon\delta} e^{\epsilon |u-y'|} \rho_\epsilon(y') \leq e^{10\epsilon\delta +1}  \rho_\epsilon(y').
\ee
Therefore, on one hand we have
$$l_\epsilon (\gamma)= \int_\gamma \rho_\epsilon(u) |du| \leq 2 e^{10\epsilon\delta +1}  \rho_\epsilon(y')|x-y| $$
and on the other hand, we have 
$$l_\epsilon(\gamma) = \int_\gamma \rho_\epsilon |du| \geq \frac{\rho_\epsilon(y')}{ e^{10\epsilon \delta +1}} |x-y|.
$$
Now, if $\epsilon |x-y| > 1/2$, then by Lemma \ref{conelemma} for every $u\in \gamma$, we have 
$$e^{-\epsilon b(u) } \leq e^{-\epsilon b(y')}e^{-\epsilon |u-y'|} e^{\epsilon 16\delta+\epsilon 10h},$$
which gives us 
$$\rho_\epsilon(u) \leq e^{\epsilon 16\delta+\epsilon 10h} \rho_\epsilon(y') e^{-\epsilon |u-y'|}.$$
Thus,a simple computation shows that
\begin{align*}
l_\epsilon(\gamma) &= \int_\gamma \rho_\epsilon (u) |du| \\
&\leq e^{\epsilon 16\delta+\epsilon 10h} \rho_\epsilon(y') \int_\gamma e^{-\epsilon |u-y'|}|du|\\
&\leq e^{\epsilon 16\delta+\epsilon 10h} \rho_\epsilon(y')\left[ \int_{\gamma[x, y']} e^{-\epsilon l(\gamma[u, y'])+\epsilon h}|du| + \int_{\gamma[y', y] } e^{-\epsilon l(\gamma[u, y'])+\epsilon h}|du| \right]\\
&\leq 2e^{\epsilon 16\delta+\epsilon 11h} \rho_\epsilon(y') \int_0^\infty e^{-\epsilon t} dt\\
&\leq 2\epsilon^{-1} e^{\epsilon 16\delta+\epsilon 11h} \rho_\epsilon(y'),
\end{align*}
which gives the upper bound in \eqref{inter} when $|x-y|>1/2$. For the lower bound of \eqref{inter}, in view of \eqref{allu_jose_02_0016}, we find that
$$b(u)\leq b(y') +|u-y'|+10\delta$$
for all $u\in \gamma$, which ensures that $\rho_\epsilon(u)\geq \rho_\epsilon(y') e^{-\epsilon |u-y'|} e^{- 10 \epsilon \delta}$. Therefore,
\begin{align*}
l_\epsilon(\gamma) &= \int_{\gamma} \rho_\epsilon(u) |du|\\
&\geq  \rho_\epsilon(y')e^{- 10 \epsilon \delta} \int_\gamma e^{-\epsilon |u-y'|} |du|\\
& \geq  \rho_\epsilon(y')e^{- 10 \epsilon \delta} \left[ \int_{\gamma[x, y']} e^{-\epsilon l(\gamma[u, y'])}|du| + \int_{\gamma[y', y] } e^{-\epsilon l(\gamma[u, y'])}|du| \right]\\
&\geq  \rho_\epsilon(y')e^{- 10 \epsilon \delta} \int_0^{|x-y|/2} e^{-\epsilon t} dt \\
&\geq  \rho_\epsilon(y')e^{- 10 \epsilon \delta} \frac{1}{\epsilon} \left(1-e^{-1/4}\right) \\
&\geq \frac{1}{2} \rho_\epsilon(y')e^{- 10 \epsilon \delta} \frac{1}{\epsilon}.
\end{align*}
This completes the proof.
\end{proof}
\begin{corollary} \label{notwritten}
There exists a constant $M=M(\delta, \epsilon)$ such that 
\be \label{notwritteneq}
d_\epsilon (x, y) \asymp_M e^{-\epsilon(x|y)_b} \min \left\{1/2, \epsilon|x-y|\right\} \text{ for all } x,y\in X.
\ee
\end{corollary}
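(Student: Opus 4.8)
The plan is to use Corollary~\ref{notwritten} purely as the passage from the length estimate of Lemma~\ref{comparelemma}, which concerns one particular $h$-short arc, to the deformed distance $d_\epsilon$, which is an infimum over all joining curves. Fix $x,y\in X$. Since $X$ is intrinsic, for $h<1/13$ I may select an $h$-short arc $\alpha:x\cra y$ whose length satisfies $l(\alpha)\leq 2|x-y|$; it suffices to take a $c|x-y|$-short arc with $c=\min\{2,\,1+h/|x-y|\}$, exactly as in the proof of Theorem~\ref{allu_jose_02_0002}. In particular $\alpha\in\mathcal{A}$, so both Lemma~\ref{comparelemma} and Theorem~\ref{allu_jose_02_0002} apply to this same arc.

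For the upper bound I would simply note that $d_\epsilon(x,y)\leq l_\epsilon(\alpha)$ directly from the definition \eqref{allu_jose_02_0006} of $d_\epsilon$ as an infimum over joining curves, and then invoke the upper half of \eqref{comparelemmaeq} to obtain
$$d_\epsilon(x,y)\leq l_\epsilon(\alpha)\leq C\,e^{-\epsilon(x|y)_b}\min\{1/2,\,\epsilon|x-y|\}.$$

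For the lower bound the key observation is that the density $\rho_\epsilon$ satisfies the Harnack hypothesis \eqref{allu_jose_02_0003} of Theorem~\ref{allu_jose_02_0002} with $\lambda=e^{10\epsilon\delta}$; this is precisely the content of \eqref{allu_jose_02_0018}. Hence, provided $0<\epsilon\leq\epsilon_0$, the Gehring--Hayman inequality \eqref{allu_jose_02_0004} is available and gives $l_\epsilon(\alpha)\leq K\,l_\epsilon(\gamma)$ for \emph{every} curve $\gamma:x\cra y$. Combining this with the lower half of \eqref{comparelemmaeq}, for each such $\gamma$ I get
$$l_\epsilon(\gamma)\geq \frac{1}{K}\,l_\epsilon(\alpha)\geq \frac{1}{KC}\,e^{-\epsilon(x|y)_b}\min\{1/2,\,\epsilon|x-y|\}.$$
Taking the infimum over all curves $\gamma$ joining $x$ and $y$ yields the lower bound for $d_\epsilon(x,y)$ with the same right-hand side. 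Setting $M=\max\{C,\,KC\}$ then produces the two-sided estimate \eqref{notwritteneq}, with $M$ depending only on $\delta$ and $\epsilon$ through the constants $K=K(\kappa,h)$ and $C=C(\delta,\epsilon,h)$ together with the relation $\kappa=3\delta+3h/2$ (so that with $h$ fixed one has $M=M(\delta,\epsilon)$).

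This argument is short because all the analytic work has already been absorbed into Lemma~\ref{comparelemma} and Theorem~\ref{allu_jose_02_0002}; the two points that genuinely need care are verifying that the chosen arc $\alpha$ lies in $\mathcal{A}$, so that both results apply to the \emph{same} arc, and checking that $\rho_\epsilon$ meets the Harnack hypothesis so that Gehring--Hayman may be invoked for the lower bound. The main (mild) obstacle is the bookkeeping of the constant $M$: one must confirm that $K$ and $C$, and hence $M$, are independent of the particular pair $x,y$, which holds since the constants in Lemma~\ref{comparelemma} and Theorem~\ref{allu_jose_02_0002} are uniform in the endpoints.
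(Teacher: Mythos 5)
Your proposal is correct and follows essentially the same route as the paper: the upper bound comes from $d_\epsilon(x,y)\leq l_\epsilon(\alpha)$ together with the upper half of Lemma~\ref{comparelemma}, and the lower bound comes from applying the Gehring--Hayman Theorem~\ref{allu_jose_02_0002} (with $\rho_\epsilon$ satisfying \eqref{allu_jose_02_0003} via \eqref{allu_jose_02_0018}) to compare $l_\epsilon(\alpha)$ with the infimum defining $d_\epsilon$, then invoking the lower half of \eqref{comparelemmaeq}. Your explicit verification of the Harnack hypothesis with $\lambda=e^{10\epsilon\delta}$ and of the restriction $0<\epsilon\leq\epsilon_0$ is slightly more careful than the paper's one-line argument, but the substance is identical.
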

\begin{proof}
The upperbound follows easily from \eqref{comparelemmaeq}. For the lower bound, take $h=1/14$ and choose an $h-$short arc $\gamma :x\cra y$ satisfying $l(\gamma)\leq 2|x-y|$. Then by Theorem \ref{allu_jose_02_0002}, we have
$$l_\epsilon(\gamma) \leq K(\delta) d_\epsilon(x, y).$$
Combining this with the lower bound for $l_\epsilon(\gamma)$ in Lemma \ref{comparelemma}, we get the desired result. 
\end{proof}
\begin{proof}[Proof of Theorem \ref{allu_jose_02_0007}]
Let $X$ be an intrinsic $\delta-$hyperbolic space with atleast two elements in the boundary $\delta_\infty X$. 
We equip $X$ with the metric defined as in \eqref{allu_jose_02_0006} and we intend to prove that the space $X_\epsilon=(X, d_\epsilon)$ are unbounded uniform spaces.
Our first aim is to prove that $X_\epsilon$ is a rectifiably connected metric space. To this end, we  show that the identity map $\left(X, |x-y|\right)\rightarrow \left(X, d_\epsilon\right)$ is locally bilipschitz.  Let $w\in X$ be fixed and consider the open ball $B(w, 1)$. Then, for every $z\in B(w, 1)$, by virtue of \eqref{allu_jose_02_0018}, we obtain
$$e^{-\epsilon-10\epsilon\delta}\rho_\epsilon (w)\leq \rho_\epsilon (z) \leq e^{\epsilon+10\epsilon\delta}\rho_\epsilon(w).$$
Now, for $x, y\in B(w, 1)$, choose an  $\alpha$ in $X$ joining them satisfying $l(\gamma)\leq 2|x-y|$, then for any $u\in \alpha$ we have by \eqref{allu_jose_02_0018} that,
$$\rho_\epsilon(u) \leq e^{10\epsilon\delta}e^{\epsilon|x-u|}\rho_\epsilon(x)\leq  e^{10\epsilon\delta}e^{4\epsilon}\rho_\epsilon(x) \leq e^{5\epsilon+20\epsilon\delta} \rho_\epsilon (w).$$
Thus, for $x, y\in B(w, 1)$, this implies that,
$$d_\epsilon(x, y) \leq \int_\alpha \rho_\epsilon \ds \leq 2e^{5\epsilon+20\epsilon\delta}\rho_\epsilon(w)|x-y| . $$
Now, for any curve $\gamma$ joining $x$ and $y$ we can find a subcurve $\gamma'$ of $\gamma$ emanating from $x$ such that $l(\gamma')=|x-y|$. Therefore, by \eqref{allu_jose_02_0018} one computes,
\begin{align*}
d_\epsilon(x, y) &= \inf_\gamma\int_\gamma \rho_\epsilon \ds \geq \inf_\gamma\int_{\gamma'} \rho_\epsilon \ds\\
&\geq \int_0^{|x-y|}\rho_\epsilon(x) e^{-10\epsilon \delta} e^{-\epsilon t}\dt\\
&\geq \rho_\epsilon(x) e^{-10\epsilon \delta-2\epsilon}|x-y|\\
&\geq e^{-20\epsilon\delta-3\epsilon}\rho_\epsilon(w)|x-y| .
\end{align*}
Hence, we see that the identity map is locally bilipschitz and thus $X_\epsilon$ is a rectifiably connected metric space. \\[2mm]
We next verify that $X_\epsilon$ is unbounded. 
For $h<1/13$, choose 
$h-$short arcs $\alpha_i:o\cra u_i$ satisfying $l(\alpha_i)\leq 2|o-u_i|$, where the sequence $\{u_i\}\in \omega$. 
By Lemma \ref{allu_jose_02_0012}, we may assume that for any $x\in X$ and for sufficiently large $n$, we have
$$(\omega|o)_x \leq (u_n|o)_x+\frac{1}{2}$$
and
$$(\omega|x)_o \geq (u_n|x)_o-\delta-\frac{1}{2}.$$
Therefore, for $b=b_{\omega, o}\in \mathcal{B}(\omega)$ and for sufficiently large $n$, we obtain,
$$b(x) = (\omega|o)_x-(\omega|x)_o \leq (u_n|o)_x-(u_n|x)_o+\delta+1= |u_n-x|-|u_n-o|+\delta+1.$$
In particular, for $x=\alpha_n(t)$, we have
$$b(\alpha_n(t))\leq |u_n-\alpha_n(t)|-|u_n-o|+\delta+1\leq h+\delta +1$$
for sufficiently large $n$.
Now, by combining this with Theorem \ref{allu_jose_02_0002}, we obtain,
$$d_\epsilon(0, u_n)\geq \frac{1}{M} l_\epsilon(\alpha_n) \geq \frac{1}{M} \int_{\alpha_n}e^{-\epsilon(\delta+h+1)}\dt \geq \frac{1}{M}e^{-\epsilon(\delta+2)}|o-u_n|$$
for sufficiently large $n$. Since, $|o-u_n|\ra \infty $ as $n\ra \infty$ we obtain that $X_\epsilon$ is unbounded.\\[2mm]
Next we prove that $X_\epsilon$ is incomplete as long as there exists $\zeta\neq \omega$ such that $\zeta\in \delta_\infty X$. 
Let $\{u_n\}\in \zeta $, we prove that  $\{u_n\} $ is a Cauchy sequence in $X_\epsilon$. 
From \eqref{3.6} we have
$$(u_n, u_m)_b \dot{=} (u_n, u_m)_o -(u_n|\omega)_o -(u_m|\omega).$$
Since  $\{u_n\}\in \zeta \neq \omega$ we have, $(u_n|\omega)_o <\infty $ and $(u_m|\omega)<\infty $ for every $n, m\in \mathbb{N}$. Therefore, we see that
$(u_n, u_m)_b \rightarrow \infty $ as $n, m \rightarrow \infty$. Let $\gamma : u_n \cra u_m $ be any $h-$short arc with $h<1/13$ satisfying $l(\gamma) \leq 2|u_n-u_m|$. Further, without loss of generality we assume that $\epsilon|u_n-u_m| \geq 1/2$. Then, by Lemma \ref{comparelemma}, we obtain that
$$d_\epsilon(u_n, u_m ) \leq l_\epsilon(\gamma) \leq \frac{C}{2} e^{-\epsilon (x|y)_b } \rightarrow 0  \text{ as } n, m\rightarrow \infty .$$
Thus,  $\{u_n\}$ is a Cauchy sequence in $(X_\epsilon, d_\epsilon)$.
Since, $\{u_n\}$ is a Gromov sequence in $X$, and $X$ and $X_\epsilon$ are locally bilipschitz we see that $\{u_n\}$ cannot converge to a point in $X_\epsilon$. Therefore, $X_\epsilon$ is not complete and hence $d_\epsilon X = \bar{X_\epsilon}\setminus X_\epsilon\neq \emptyset$.\\[2mm]
Using Harnack inequality, for all $x, y\in X$ satisfying $|x-y|>1/\epsilon$, we calculate 
\begin{align*}
d_\epsilon(x, y) &= \inf \int_\gamma \rho_\epsilon \ds\\
&\geq \int_0^{1/\epsilon} e^{-10\epsilon\delta} \rho_\epsilon(x) e^{-\epsilon t}\dt\\
&= \epsilon^{-1} e^{-10\epsilon\delta} \rho_\epsilon(x) \left(1-e^{-1}\right)\\
&\geq \epsilon^{-1} e^{-10\epsilon\delta -1} \rho_\epsilon(x)
\end{align*}
We write $d_\epsilon(x) = \dist_\epsilon (x, d_\epsilon X)$ for any $x\in X$. Then, we see that,
\be \label{bdrydist}
d_\epsilon(x) \geq \frac{\rho_\epsilon(x)}{\epsilon e^{10\epsilon\delta +1}}
\ee
It remains to prove that $X_\epsilon$ is uniform. Since, $X$ is intrinsic, any two points $x, y\in X$ can be joined by an $h-$short arc $\gamma$ satisfying $l(\gamma)\leq 2|x-y|$, where $0\leq h \leq 1/13.$ Therefore, by Theorem \ref{allu_jose_02_0002}, we observe that the quasiconvexity condition is satisfied. Thus, we need only verify the double cone arc condition for the curve $\gamma$.
By symmetry it suffices to prove the cone condition for all $z\in \gamma[z, y']$ where $y' \in \gamma $ is such that $l(\gamma[y', y]) = (x|\omega)_y$. \\[2mm]
To this end, by Lemma \ref{conelemma} we have
 $$b(u) \geq b(z)+ |u-z|-16\delta-10h, \text{ for all } u \in \gamma[x, z].$$
 Thus,
 \begin{align*}
 l_\epsilon(\gamma) &= \int_\gamma \rho_\epsilon(u) |du|\\
 &= \int_\gamma e^{-\epsilon b(u)} |du|\\
 & \leq \int_\gamma \exp\left\{-\epsilon\left[ b(z)+ |u-z|-16\delta-10h\right]\right\}|du|\\
 &\leq \rho_\epsilon(z) \exp \left\{\epsilon\left[16\delta+10h\right]\right\} \int_\gamma e^{-\epsilon|u-z|} |du|\\
 &\leq \rho_\epsilon(z) \exp \left\{\epsilon\left[16\delta+11h\right]\right\}\int_0^\infty e^{-\epsilon t} dt\\
 &\leq \exp \left\{\epsilon\left[26\delta+11h\right]+1\right\} d_\epsilon(z)
 \end{align*}
 where in the last step we used \eqref{bdrydist}. This completes the proof of the Theorem \ref{allu_jose_02_0007}.
\end{proof}
\begin{proof}[Proof of Theorem \ref{allu_jose_02_0007.1}]
 Let $\phi : \delta_\infty X \setminus \{\omega\} \rightarrow \delta_\epsilon X $ be defined as follows. For every $\zeta \neq \omega $ in $\delta_\infty X $, let $\{v_n\} \in \zeta $ then by the proof of incompleteness in Theorem \ref{allu_jose_02_0007}, we see that $\{v_n\}$ is a $d_\epsilon$ Cauchy sequence which converges to a limit on $\delta_\epsilon X$. 
We define $\phi(\zeta) $ as the limit of this Cauchy sequence in $\delta_\epsilon X$. \\[2mm]
Let $\{u_n\}, \{u_n'\} \in \eta \neq \omega$, then an application of \eqref{comparelemmaeq} shows that they are equivalent Cauchy sequences in the space $(X_\epsilon, d_\epsilon)$. Therefore, the map $\phi$ is well defined.\\[2mm]
Secondly we show that $\phi $ is injective. If for $\zeta , \eta \in \delta_\infty X \setminus \{\omega\}$ we have $\phi(\zeta) = \phi (\eta)$, then for any Gromov sequences $\{u_n\}\in \zeta$ and $\{v_n\} \in \eta $, we obtain that
$$d_\epsilon (u_n, v_n ) \rightarrow \infty \text{ as } n\rightarrow \infty.$$
Moreover, if $\epsilon |u_n-v_n| \leq 1/2$, we have
$$2(u_n|v_n)_o \geq (u_n|v_n)_o - |u_n-v_n| \rightarrow \infty \text{ as } n\rightarrow \infty,$$
which shows that $\{u_n\}$ and $\{v_n\}$ are equivalent Gromov sequences and hence $\zeta = \eta$. If $\epsilon |u_n-v_n| > 1/2$, then by Lemmas \ref{comparelemma}, \ref{allu_jose_02_0002} and from \eqref{3.6} we conclude that
$$(u_n|v_n)_o - (u_n|\omega)_o - (\omega|v_n)_o \dot{=} (u_n|v_n)_b \rightarrow \infty \text{ as } n\rightarrow \infty,$$
which implies that $(u_n|v_n)_o \rightarrow \infty$ and therefore $\zeta = \eta$. Hence, $\phi $ is injective.\\[2mm]
Let $a\in \delta_\epsilon X$ and let $\{a_n\} $ be a $d_\epsilon$ Cauchy sequence which converges to $a$. We prove that $\{a_n\} $ is a Gromov sequence in $X$.
Observe that $\{a_n\} $ cannot be Cauchy in $X$, because if it is indeed a Cauchy sequence then $\{a_n\} $ converges to some point in $X$. But since $X$ and $X_\epsilon$ are locally bilipchitz equivalent this would imply that $\{a_n\} $  is $d_\epsilon$ convergent to some point in $X_\epsilon$, which is a contradiction. Therefore, from Corollary \ref{notwritten}, we see that $(a_n|a_m)_b \rightarrow \infty $ as $n, m\rightarrow \infty$. Again, by \eqref{3.6}, we obtain
$$(a_n|a_m)_o - (a_n|\omega)_o- (\omega|a_m)_o \dot{=} (a_n|a_m)_b \rightarrow \infty \text{ as } n, m \rightarrow \infty ,$$
which guarantees us that $(a_n|a_m)_o \rightarrow \infty$. Hence,  $\{a_n\} $ is a Gromov sequence in $X$, and it is clear by the definition of $\phi$ that $a$ the image under $\phi$ of the Gromov boundary element represented by $\{a_n\} $. Thus $\phi$ is surjective. This completes the proof.
\end{proof}
\noindent{\bf Acknowledgement.}
The first author thanks SERB-CRG and the second author's research work is supported by CSIR-UGC.\\
\noindent\textbf{Compliance of Ethical Standards:}\\
\noindent\textbf{Conflict of interest.} The authors declare that there is no conflict  of interest regarding the publication of this paper.\vspace{1.5mm}\\
\noindent\textbf{Data availability statement.}  Data sharing is not applicable to this article as no datasets were generated or analyzed during the current study.\vspace{1.5mm}\\
\noindent\textbf{Authors contributions.} Both the authors have made equal contributions in reading, writing, and preparing the manuscript.

\end{document}